\newcounter{proof}
\newenvironment{myproof}%
{\stepcounter{proof}\begin{proof}}%
{\end{proof}}%
\newcounter{proofstep}[proof]
\newenvironment{proofstep}%
{\refstepcounter{proofstep}\bigskip\par\noindent%
  \textbf{Step \theproofstep.}%
  \noindent}%
{\par}%
\theoremstyle{plain}
\newtheorem{thm}{Theorem}[section]
\newtheorem*{thm*}{Theorem}
\newtheorem{lem}[thm]{Lemma}
\theoremstyle{definition}
\theoremstyle{remark}
\numberwithin{equation}{section}
\newcommand{\bb}[1]{\ensuremath{\mathbb #1}}
\newcommand{\cal}[1]{\ensuremath{\mathcal #1}}
\newcommand{\scr}[1]{\ensuremath{\mathscr #1}}
\newcommand{\bfpar}[1]{\paragraph{\textbf{#1}}}
\newcommand{\union}{\cup}
\newcommand{\Union}{\bigcup}
\newcommand{\isect}{\cap}
\DeclareMathOperator{\umd}{UMD}
\newcommand{\charfun}{\scalebox{1.0}{\ensuremath{\mathbbm 1}}}
\newcommand{\dcubes}{\scr Q}
\newcommand{\dif}{\ensuremath{\, \mathrm d}}
\DeclareMathOperator{\supp}{supp}
\DeclareMathOperator{\lin}{span}
\DeclareMathOperator{\cond}{\bb E}
\DeclareMathOperator{\salg}{\text{$\sigma$-algebra}}
\DeclareMathOperator{\pred}{\pi}
\DeclareMathOperator{\lev}{lev}
\DeclareMathOperator{\proj}{pr}
\newcommand{\pow}{\ensuremath{\scr P}}
\newcommand{\stripe}{\ensuremath{\scr S}}
\renewcommand{\mod}{\operatorname*{mod}}
\renewcommand{\mid}{\operatorname*{|}}
\begin{document}
\title{Adaptive deterministic dyadic grids on spaces of homogeneous type}

\date{\today}
\author[R. Lechner]{Richard Lechner}
\address{
  Richard Lechner,
  Institute of Analysis,
  Johannes Kepler University Linz,
  Altenberger Strasse 69,
  A-4040 Linz, Austria
}
\email{Richard.Lechner@jku.at}
\author[M. Passenbrunner]{Markus Passenbrunner}
\address{
  Markus Passenbrunner,
   Institute of Analysis,
   Johannes Kepler University Linz,
   Altenberger Strasse 69,
   A-4040 Linz, Austria
}
\email{Markus.Passenbrunner@jku.at}

\date{\today}

\subjclass[2010]{46E40}
\keywords{Space of homogeneous type, vector--valued, UMD, adaptive dyadic grid, rearrangement
  operator, stripe operator, martingale difference sequence}

\begin{abstract}
  In the context of spaces of homogeneous type, we develop a method to deterministically construct
  dyadic grids, specifically adapted to a given combinatorial situation.
  This method is used to estimate vector--valued operators rearranging martingale difference
  sequences such as the Haar system.
\end{abstract}

\maketitle

\section{Introduction}\label{s:intro}

In~\cite{Figiel1988,Figiel1990}, T. Figiel developed martingale methods to prove a vector--valued
$T(1)$ theorem by decomposing the singular integral operator $T$ into an absolutely converging
series of basic building blocks $T_m$ and $U_m$, $m \in \bb Z$.
Those operators are given by the linear extension of
\begin{equation}\label{eq:intro:shift-1}
  T_m h_I = h_{I + m |I|}
  \qquad\text{and}\qquad
  U_m h_I = \charfun_{I + m |I|} - \charfun_I,
\end{equation}
where $\{ h_I \}$ denotes the Haar system on standard dyadic intervals $I$ and $\charfun_A$ the
characteristic function of the set $A$.
The crucial norm estimates obtained in~\cite{Figiel1988} take the form
\begin{align}
  \| T_m\, :\, L_E^p \rightarrow L_E^p \|
  & \leq C\, \big( \log_2 (2 + |m|) \big)^\alpha,
  \label{eq:intro:shift-estimate-1}\\
  \| U_m\, :\, L_E^p \rightarrow L_E^p \|
  & \leq C\, \big( \log_2 (2 + |m|) \big)^\beta,
  \label{eq:intro:shift-estimate-2}
\end{align}
where $1 < p < \infty$ and the constant $C > 0$ depends only on $p$, the $\umd$--constant of the
Banach space $E$ and $\alpha, \beta < 1$.
Estimates~\eqref{eq:intro:shift-estimate-1} and~\eqref{eq:intro:shift-estimate-2} are obtained by
hard combinatorial arguments, analyzing structure and position of dyadic intervals.

T. Figiel's decomposition method was extended in~\cite{MP2012} to spaces of homogeneous type to
obtain a vector--valued $T(1)$ theorem, requiring norm estimates for the building blocks $T_m$ and
$U_m$ in the setting of spaces of homogeneous type.
These estimates are proved by hard combinatorial arguments similar to~\cite{Figiel1988}.

In~\cite{lechner:thirdshift}, an alternative proof for the estimates of $T_m$ and $U_m$ is given which
eliminates the hard combinatorics in~\cite{Figiel1988} to a great extent.
Adapting the dyadic grid by means of an algebraic shift, $T_m$ and $U_m$ are decomposed into roughly
$\log_2(2 + |m|)$ martingale transform operators, thereby yielding~\eqref{eq:intro:shift-estimate-1}
and~\eqref{eq:intro:shift-estimate-2}.
The shift of the dyadic grid is accomplished by the one--third--trick, which originates in the work
of~\cite{davis:1980}, \cite{wolff:1982}, \cite{garnett_jones:1982},
and~\cite{chang_wilson_wolff:1985}.

Adaptive dyadic grids also proved to be a valuable tool for estimating so called stripe operators
in~\cite{lechner:int:2011}.
Those stripe operators were used in~\cite{LeeMuellerMueller2011} as well as
in~\cite{lechner:int:2011} to show weak lower semi-continuity of functionals with separately
convex integrands on scalar--valued $L^p$ and vector--valued $L^p$, respectively.
For the scalar--valued $L^2$ version of this result, cf.~\cite{s_mueller:1999}.

In this paper we extend the method from~\cite{lechner:thirdshift} to construct adapted dyadic grids in
spaces of homogeneous type, which allow us to
\begin{enumerate}[(i)]
\item simplify the combinatorial arguments for the estimation of the rearrangement operators $T_m$
  used in the proof of the $T(1)$ theorem in~\cite{MP2012},
\item generalize the vector--valued result in~\cite{lechner:int:2011} on stripe operators to
  spaces of homogeneous type.
\end{enumerate}

\subsection*{Related recent developments}

In~\cite{hytonen:2011}, T. P. Hyt\"onen presented a proof of T. Figiel's vector-valued $T(1)$
theorem, cf.~\cite{Figiel1990}, based upon randomized dyadic grids, originating
in~\cite{nazarov_treil_volberg:1997,nazarov_treil_volberg:2003}.
By contrast, the method developed in the present paper allows us to adapt a dyadic grid
\emph{deterministically} to a given combinatorial situation.

\section{Preliminaries}\label{s:preliminaries}

In this section we present some basic facts concerning spaces of homogeneous type.
For basic facts on $\umd$--spaces used within this work, the notion of Rademacher type and
cotype as well as Kahane's contraction principle and Bourgain's version of Stein's martingale
inequality, we refer to~\cite{lechner:thirdshift}.

Let $X$ be a set. A mapping $d:X\times X\rightarrow\mathbb{R}_{0}^{+}$ with
the properties that for all $x,y,z\in X$,
\begin{enumerate}
\item $d(x,y)=0\Leftrightarrow x=y$, 
\item $d(x,y)=d(y,x)$, 
\item $d(x,y)\leq K_X\, (d(x,z)+d(z,y))$ for some constant $K_X\geq 1$ only depending on $X$, 
\end{enumerate}
is called a \emph{quasimetric }and $(X,d)$ is called a \emph{quasimetric space.}
Given a quasimetric $d$, we define the ball centered at $x\in X$ with radius $r>0$ as
\begin{equation*}
B(x,r):=\{y\in X:d(x,y)<r\}.
\end{equation*}
Additionally, a set $A\subset X$ is called \emph{open}, if for all $x\in X$ there exists $r>0$ such
that $B(x,r)\subseteq A$.
Furthermore, for an arbitrary set $A\subset X$ and $r>0$, define
\begin{equation*}
  B(A,r):=\{y\in X: d(A,y)<r\}.
\end{equation*}

Let $(X,d)$ be a quasimetric space such that every ball in the quasimetric $d$ is open and $| \cdot
|$ be a Borel measure. If the \emph{doubling condition} holds, i.e. there is a constant $C_d>0$ such
that
\begin{equation}\label{eq:doubling}
 0<| B(x,2r) |\leq C_d | B(x,r) |<\infty,\quad x\in X,\ r>0,
\end{equation}
then $(X,d,| \cdot |)$ is called a \emph{space of homogeneous type.}
Since for a given quasimetric space $(X,d)$, the balls in $X$ are not necessarily open, we added
this condition to the definition. This is the case, if for instance one imposes a Hölder condition on
$d$: There exists $c<\infty$ and $0<\beta <1$ such that for all $x,y,z\in X$ we have
\begin{equation}
|d(x,z)-d(y,z)|\leq c\cdot d(x,y)^\beta \operatorname{max}\{d(x,z),d(y,z)\}^{1-\beta}.\label{eq:lipcond}
\end{equation}
In fact, R. A. Mac{\'{\i}}as and C. Segovia proved in \cite{MaciasSegovia1979} that for every space
of homogeneous type there exists an equivalent quasimetric with the desired Hölder property.
Here, a quasimetric $d'$ is equivalent to a quasimetric $d$ if there exists a finite constant $c$
such that
\begin{equation*}
\frac{1}{c}\, d(x,y)\leq d'(x,y)\leq c\, d(x,y),\qquad x,y\in X.
\end{equation*}

Let $\scr C$ be a collection of arbitrary sets. $\scr C$ is called \emph{nested}, if  $A\cap B\in
\{A,B,\emptyset\}$ for all $A,B\in \scr C$.
Furthermore, for such a given nested collection $\scr C$ we define the predecessor
$\pi_{\scr C}(C)$ of $C$ with respect to the collection $\scr C$ by
\[
\pi_{\scr C}(C):=\bigcap\big\{D:D\supsetneq C,D\in\scr C\cup\{X\}\big\}.
\]

\bfpar{Dyadic cubes}
In spaces of homogeneous type, one can construct a collection of subsets that has similar properties
to dyadic cubes in $\bb R^k$, cf. M. Christ~\cite{Christ1990} and G. David~\cite{David1991}.

\begin{thm}\label{th:dyadic}
  Let $(X,d,| \cdot |)$ be a space of homogeneous type. Then there exists a system
  $\scr Q$ of open subsets of $X$ with centers $m_A\in A$ of $A\in\scr Q$ and a
  splitting $\scr Q=\cup_{n\in\mathbb Z}\scr Q_n$ such that the following
  properties are satisfied with uniform constants
  $q<1,$ $C_{1} ,C_{2},C_{3},\eta\in\mathbb{R}^{+},N\in \mathbb{N}$: 

  \begin{enumerate}
  \item For all $n\in\mathbb{Z}$ we have that $X=\bigcup_{A\in\scr Q_n}A$ up to $|
    \cdot |$-null sets.

  \item For $A\in\scr Q_k$ and $B\in\scr Q_n$ with $k\leq n$, we have either
    $B\subset A$ or $A\cap B=\emptyset.$

  \item For each $B\in \scr Q_n$ and every $k\leq n$, there is exactly one
    $A\in\scr Q_k$ such that $B\subset A$.

  \item For all $n\in\mathbb{Z}$ and $A\in\scr Q_n$ we have that
    $B(m_A,C_{1}q^{n})\subseteq A\subseteq B(m_A,C_{2} q^{n})$.
    \label{th:dyadic-4}

  \item Let $A\in\scr Q_n$.
    The boundary layer of $A$ having width $t$ is given by
    \begin{equation}\label{eq:definitionstrip}
      \partial_{t}A:=\{x\in A:d(x,X\setminus A)\leq tq^{n}\},
    \end{equation}
    and satisfies the measure estimate
    \begin{equation}\label{eq:estimatestrip}
      |\partial_{t}A |<C_{3}t^{\eta}|A|.
    \end{equation}\vspace{-0.4cm}

  \item For all $n\in\mathbb{Z,}$ the collection $\scr Q_n$ is countable.

  \item For all $n\in\mathbb{Z}$ and $A\in \scr Q_n$ we have
    $N(A):=|\{B\in \scr Q_{n+1}:B\subseteq A\}|\leq N$.

  \item For all $n\in\mathbb{Z}$ and $A\in\scr Q_n$ there exists a subcollection $\scr S$ of
    $\scr Q_{n+1}$ with $|\scr S|\leq N$ and
    \[
    A=\bigcup_{B\in\scr S} B\quad \text{up to }|\cdot|\text{-null sets}.
    \]
  \end{enumerate}
\end{thm}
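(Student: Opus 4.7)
The plan is to use the Christ--David construction, which builds the cubes from a nested sequence of maximal separated nets. First I would fix a parameter $q \in (0,1)$ small enough (depending only on the quasimetric constant $K_X$; something like $q < 1/(8 K_X^2)$ will do), and for each $n \in \bb Z$ select a maximal $q^n$-separated set $Z_n = \{z_\alpha^n\}_{\alpha \in I_n} \subset X$. Maximality forces $Z_n$ to be a $q^n$-net, so the balls $B(z_\alpha^n, q^n)$ cover $X$, while the balls $B(z_\alpha^n, q^n/2)$ are pairwise disjoint. Properties (6), (7) follow from doubling applied to a ball around any $z_\alpha^n$ at an appropriate ancestor scale.

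Next I would organize $\bigsqcup_n Z_n$ into a tree: for each $z_\beta^{n+1}$, select a unique parent $z_\alpha^n \in Z_n$ minimizing $d(z_\alpha^n, z_\beta^{n+1})$ (ties broken by a fixed ordering of $I_n$). For $z \in Z_n$ and $k \geq n$ I write $D_k(z) \subset Z_k$ for the set of its $k$-th generation descendants. Define the preliminary open and closed cubes
\begin{equation*}
  \widetilde Q_\alpha^n := \bigcup_{k \geq n} \bigcup_{w \in D_k(z_\alpha^n)} B(w, c_1 q^k),
  \qquad
  \overline Q_\alpha^n := \overline{\widetilde Q_\alpha^n}.
\end{equation*}
With $q$ small and $c_1, c_2$ chosen appropriately, a standard iterated application of the quasitriangle inequality produces constants with $B(z_\alpha^n, C_1 q^n) \subseteq \widetilde Q_\alpha^n$ and $\overline Q_\alpha^n \subseteq B(z_\alpha^n, C_2 q^n)$, which will give (4). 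To obtain honest cubes $Q_\alpha^n$ satisfying (1)--(3) I would assign each point $x \in X$ that lies in the ``ambiguous region'' $\overline Q_\alpha^n \setminus \widetilde Q_\alpha^n$ to exactly one cube by picking the smallest $\alpha$ with $x \in \overline Q_\alpha^n$, and then define $Q_\alpha^n$ at coarser scales recursively as the union of its children. The nesting property (2), the unique parent property (3), and the partition property (1) then follow directly from the tree structure, while (8) is a tautology once (3) is established.

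The main obstacle, as expected, is the small-boundary estimate (5). The strategy is the standard Christ iteration. A point $x \in \partial_t A$ with $A \in \scr Q_n$ lies close to $X \setminus A$, so at every finer scale $k > n$ the descendant cube $B \subseteq A$ of level $k$ containing $x$ must itself have its center $z$ satisfy $d(z, X \setminus A) \lesssim t q^n + q^k$. Using (4) and the doubling condition, the measure of points in $A$ within $q^n$-distance $t$ of $X \setminus A$ is controlled by the measure of a thin shell near the boundary of a ball $B(z_A^n, C_2 q^n)$; doubling gives this measure a bound of the form $c \cdot t^{\eta_0} |A|$ for some small $\eta_0$, where the key geometric input is that one can cover the shell by $\sim t^{-1}$ balls of radius $\sim t q^n$, each of measure $\sim t^{\delta} |A|$ for a $\delta > 1$ inherited from doubling. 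Iterating this shell estimate across a few generations yields the polynomial bound (5) with the stated exponent $\eta$. This is the combinatorially delicate step; everything else is bookkeeping on the tree.
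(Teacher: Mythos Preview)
The paper does not prove this theorem at all: it is stated as a known structural result and attributed to M.~Christ~\cite{Christ1990} and G.~David~\cite{David1991}, with no argument given in the paper itself. So there is no ``paper's own proof'' to compare against; your proposal is essentially a sketch of the Christ--David construction that the paper cites, and in that sense your approach is the correct and expected one.

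That said, your outline of the small-boundary estimate~(5) is not quite the argument that actually works. You describe it as a shell estimate---cover $\partial_t A$ by roughly $t^{-1}$ balls of radius $\sim t q^n$, each of measure $\sim t^{\delta}|A|$ with $\delta>1$ ``inherited from doubling''---but doubling alone does not give an \emph{upper} bound of the form $|B(x,tr)| \lesssim t^{\delta}|B(x,r)|$ with $\delta>1$; it gives the reverse inequality. Christ's actual mechanism is an iteration across scales: one first shows the coarse estimate that for each cube $Q\in\scr Q_n$ the set $\{x\in Q: d(x,X\setminus Q)\leq c\,q^{n+1}\}$ has measure at most $(1-\varepsilon)|Q|$ for some fixed $\varepsilon>0$ (this follows because $Q$ contains the ball $B(m_Q,C_1 q^n)$, a definite portion of which stays away from $X\setminus Q$). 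Then one observes that if $x\in\partial_t A$ with $t\approx q^j$, the chain of descendant cubes $Q_n\supset Q_{n+1}\supset\cdots\supset Q_{n+j}\ni x$ must each have $x$ close to their complements, so the measure of such $x$ is bounded by $(1-\varepsilon)^j|A|\lesssim t^{\eta}|A|$. Your sketch should be adjusted to reflect this self-improving iteration rather than a direct covering-by-small-balls argument.
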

\noindent
We define the \emph{level} of a cube $A\in\dcubes_{n}$ as $\lev A:=n$, and furthermore
\begin{equation}\label{eq:scaling}
  r\diamond A:=B(A,r q^{\lev A}),
  \qquad A\in \dcubes,\ r > 0.
\end{equation}

In the following, $(X,d,|\cdot|)$ denotes a space of homogeneous type, equipped with a
quasimetric $d$ and a measure $|\cdot|$.

\begin{lem}\label{lem:diamond}
  Let $A \in \dcubes$ and $r > 0$.  Then
  \begin{equation*}
    r \diamond A
    \subset B\big(m_A,\, K_X (C_2 + r)q^{\lev A}\big).
  \end{equation*}
\end{lem}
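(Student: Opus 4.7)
The plan is to unpack the definition of $r \diamond A$ and then apply the quasi-triangle inequality, using the inclusion of $A$ in a ball around $m_A$ supplied by Theorem~\ref{th:dyadic}\eqref{th:dyadic-4}.

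Let $n := \lev A$ and take any $y \in r \diamond A = B(A, r q^n)$. By definition, $d(A, y) = \inf_{x \in A} d(x, y) < r q^n$, so I can choose $x \in A$ with $d(x, y) < r q^n$. By Theorem~\ref{th:dyadic}\eqref{th:dyadic-4}, we have $A \subseteq B(m_A, C_2 q^n)$, hence $d(m_A, x) < C_2 q^n$. Applying the quasi-triangle inequality with constant $K_X$ gives
\begin{equation*}
  d(m_A, y) \leq K_X\bigl(d(m_A, x) + d(x, y)\bigr) < K_X (C_2 + r)\, q^n,
\end{equation*}
which is exactly the assertion $y \in B(m_A, K_X(C_2 + r) q^{\lev A})$.

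There is no real obstacle here; the only thing to be a little careful about is that $d(A,y) < r q^n$ is an infimum, so $x$ is chosen to realize $d(x,y) < r q^n$ rather than attain the infimum. The whole statement is essentially a one-line consequence of the quasi-triangle inequality and the enclosing ball from item~\eqref{th:dyadic-4} of Theorem~\ref{th:dyadic}.
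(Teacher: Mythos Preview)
Your proof is correct and follows essentially the same approach as the paper: both unpack the definition of $r\diamond A$, invoke the outer ball inclusion $A\subset B(m_A,C_2 q^{\lev A})$ from Theorem~\ref{th:dyadic}\eqref{th:dyadic-4}, and apply the quasi-triangle inequality with an intermediate point in $A$. The only cosmetic difference is that the paper phrases the intermediate step via $\inf_{y\in A}$ rather than explicitly choosing a near-minimizer as you do.
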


\begin{proof}
  Let $z \in r\diamond A = B(A,\, rq^{\lev A})$ and estimate
  \begin{align*}
    d(m_A,\, z)
    & \leq \inf_{y\in A} K_X \big( d(m_A,y) + d(y,z) \big)\\
    & \leq K_X \big( C_2\, q^{\lev A} + d(A,z) \big)\\
    & \leq K_X \big( C_2\, q^{\lev A} + rq^{\lev A} \big).
    \qedhere
  \end{align*}
\end{proof}

\begin{lem}\label{lem:diamond-intersection}
  Let $A_1, A_2 \in \dcubes$
  and assume that
  \begin{equation*}
    (r_1 \diamond A_1) \isect (r_2 \diamond A_2) \neq \emptyset,
  \end{equation*}
  for some $r_1, r_2 > 0$.
  Then
  \begin{equation*}
    r_2 \diamond A_2 \subset r \diamond A_1,
  \end{equation*}
  where
  $r = 2\, K_X^3 (C_2 + r_2 )\, q^{\lev A_2 - \lev A_1} + K_X\, r_1$.
\end{lem}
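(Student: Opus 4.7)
The plan is to fix a witness point $w$ in the assumed nonempty intersection $(r_1 \diamond A_1) \cap (r_2 \diamond A_2)$, and then, for arbitrary $z \in r_2 \diamond A_2$, bound $d(A_1,z)$ by routing through $w$ with the quasimetric triangle inequality. The rough plan: the distance from $A_1$ to $z$ is controlled by $d(A_1,w) + d(w,z)$; the first term is small because $w \in r_1 \diamond A_1$, while the second is small because $w$ and $z$ both lie in a ball around $m_{A_2}$ of radius roughly $(C_2 + r_2) q^{\lev A_2}$, by Lemma~\ref{lem:diamond}.

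More concretely, I would proceed as follows. First, by Lemma~\ref{lem:diamond} applied to $A_2$, both $w$ and $z$ lie in $B(m_{A_2},\, K_X(C_2+r_2) q^{\lev A_2})$, so one application of the quasimetric inequality through $m_{A_2}$ yields
\begin{equation*}
  d(w,z) \leq K_X \big( d(w, m_{A_2}) + d(m_{A_2}, z) \big)
  \leq 2 K_X^2 (C_2 + r_2)\, q^{\lev A_2}.
\end{equation*}
Next, for any $y \in A_1$ the quasimetric inequality gives $d(y,z) \leq K_X(d(y,w) + d(w,z))$; taking the infimum over $y \in A_1$ (and noting $d(w,z)$ is independent of $y$) produces
\begin{equation*}
  d(A_1, z)
  \leq K_X \big( d(A_1, w) + d(w,z) \big)
  \leq K_X \big( r_1 q^{\lev A_1} + 2 K_X^2 (C_2 + r_2) q^{\lev A_2} \big),
\end{equation*}
where I used $w \in r_1 \diamond A_1 = B(A_1,\, r_1 q^{\lev A_1})$. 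Factoring out $q^{\lev A_1}$ shows $z \in B(A_1,\, r q^{\lev A_1}) = r \diamond A_1$ with $r$ exactly as in the statement.

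There is no real obstacle here; the only point requiring any care is the passage from the pointwise quasimetric inequality to the infimum expressing $d(A_1, \cdot)$, together with the bookkeeping of $K_X$ factors (two applications of the triangle inequality, hence $K_X^3$ appearing in front of $C_2 + r_2$). The lemma follows by taking $z$ arbitrary in $r_2 \diamond A_2$.
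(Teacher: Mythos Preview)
Your proof is correct and follows essentially the same route as the paper's: pick a point in the intersection, use Lemma~\ref{lem:diamond} and one quasi-triangle inequality to bound the distance between that point and an arbitrary $z \in r_2 \diamond A_2$, then use a second quasi-triangle inequality (passed through the infimum defining $d(A_1,\cdot)$) to control $d(A_1,z)$. The only differences are notational---the paper calls the witness point $y$ rather than $w$---and that you spell out the infimum passage a bit more explicitly.
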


\begin{proof}
  Let $y \in (r_1 \diamond A_1) \isect (r_2 \diamond A_2)$ and
  $z \in (r_2 \diamond A_2)$, then
  \begin{equation*}
    d(z,A_1) \leq K_X\, \big( d(z,y) + d(y,A_1) \big).
  \end{equation*}
  Note that $d(y,A_1) \leq r_1\, q^{\lev A_1}$ and observe
  \begin{align*}
    d(z,y)
    &\leq K_X \big( d(z,m_{A_2}) + d(m_{A_2},y) \big)\\
    &\leq 2\, K_X^2( C_2 + r_2 )\, q^{\lev A_2},
  \end{align*}
  where we used Lemma~\ref{lem:diamond} for the latter estimate.
  Combining our estimates yields
  \begin{equation*}
    d(z,A_1) \leq K_X\, \big(
      2\, K_X^2 (C_2 + r_2 )\, q^{\lev A_2 - \lev A_1}  + r_1
    \big)\, q^{\lev A_1},
  \end{equation*}
thus the assertion of the lemma follows.
\end{proof}

\section{Adaptive dyadic grids}\label{s:one-third-shift}
\noindent
In this section we provide a customizable way to adapt dyadic grids, which is then applied in
Section~\ref{s:applications} to estimate the rearrangement operators $T_m$.

We recall that $K_X$, $C_d$ are constants defined by the quasimetric and the space of homogeneous
type $X$ and $C_1$, $C_2$ are determined by the collection of dyadic cubes,
cf. Section~\ref{s:preliminaries}.
For a given collection $\scr A$ of dyadic cubes in $X$ and $\alpha > 0$ we define
\begin{equation}\label{eq:def-scrA}
  \scr A^{(\alpha)}:=\Union_{A \in \scr A} \big\{
    Q\in\scr Q_{\lev A}\, :\,
    Q\isect \alpha \diamond A \neq \emptyset
  \big\}.
\end{equation}

\begin{figure}[bt]
  \begin{center}
    \begin{pspicture}(0,0)(10,-7)

      \psframe[linestyle=solid](0,0)(2,-2)
      \uput{0}[90]{0}(1,-1){$A_1$}
      \psarc[linecolor=gray](0,0){3.5}{270}{0}
      \psline[linecolor=gray,arrowsize=2pt 2]{->}(.5,-2)(.5,-3.464101)
      \uput[180]{0}(.5,-3){$\frac{C_R}{2 K_X}$}
      \psarc(0,0){4.5}{270}{0}
      \psline[arrowsize=2pt 2]{->}(2,-1)(4.387482,-1)
      \uput[90]{0}(4,-1){$C_R$}
      \psarc[linestyle=dashed](0,0){7}{270}{0}
      \psline[linestyle=dashed,arrowsize=2pt 2]{->}(1,-2)(1,-6.928203)
      \uput[180]{0}(1,-6.6){$\alpha$}
      \rput(1.5,-1.5){%
        \psccurve[linecolor=gray,fillstyle=solid,fillcolor=lightgray]%
        (0,0)(1,-.2)(.8,-.6)(0,-.7)
        \uput[90]{0}(.45,-.7){$\varphi(A_1)^*$}
      }

      \psframe[linestyle=solid](8,-2)(10,0)
      \uput{0}[90]{0}(9,-1){$A_2$}
      \psarc[linecolor=gray](10,0){3.5}{180}{270}
      \psline[linecolor=gray,arrowsize=2pt 2]{->}(9.5,-2)(9.5,-3.464101)
      \uput[0]{0}(9.5,-3){$\frac{C_R}{2 K_X}$}
      \psarc(10,0){4.5}{180}{270}
      \psline[arrowsize=2pt 2]{->}(8,-1)(5.612518,-1)
      \uput[90]{0}(6,-1){$C_R$}
      \psarc[linestyle=dashed](10,0){7}{180}{270}
      \psline[linestyle=dashed,arrowsize=2pt 2]{->}(9,-2)(9,-6.928203)
      \uput[0]{0}(9,-6.6){$\alpha$}
      \rput(7.5,-1.5){%
        \psccurve[linecolor=gray,fillstyle=solid,fillcolor=lightgray]%
        (0,0)(1,-.2)(.8,-.6)(0,-.7)
        \uput[90]{0}(.45,-.7){$\varphi(A_2)^*$}
      }

      \rput(1.4,-4.5){%
        \psscalebox{.25}{%
          \psframe[linestyle=solid](0,0)(2,-2)
          \psarc[linecolor=gray](0,0){3.5}{270}{0}
          \psarc(0,0){4.5}{270}{0}
          \psarc[linestyle=dashed](0,0){7}{270}{0}
          \rput(1.5,-1.5){%
            \psccurve[linecolor=gray,fillstyle=solid,fillcolor=lightgray]%
            (0,0)(1,-.2)(.8,-.6)(0,-.7)
          }

          \psframe[linestyle=solid](8,-2)(10,0)
          \psarc[linecolor=gray](10,0){3.5}{180}{270}
          \psarc(10,0){4.5}{180}{270}
          \psarc[linestyle=dashed](10,0){7}{180}{270}
          \rput(7.5,-1.5){%
            \psccurve[linecolor=gray,fillstyle=solid,fillcolor=lightgray]%
            (0,0)(1,-.2)(.8,-.6)(0,-.7)
          }
        }
        \uput[90]{0}(.25,-.55){$A_3$}
        \uput[90]{0}(2.25,-.55){$A_4$}
      }

    \end{pspicture}
  \end{center}
  \caption{\,}
  \label{fig:one-third-trick-1}
\end{figure}
The following result is a version of the well known one--third--trick in spaces of homogeneous
type.
\begin{thm}\label{thm:one-third-trick}
  Let $C_R > 0$ and $\mu \in \bb N$ be such that
  \begin{equation}\label{eq:constant-constraints}
    4\, K_X^3 (1 + C_2/C_R)\cdot q^\mu \leq 1.
  \end{equation}
  Let $\scr A \subset \scr Q$ be a finite collection of cubes satisfying
  \begin{enumerate}
  \item the separation condition
    \begin{equation}\label{eq:scrA-sparsity}
      \big( C_R \diamond A_1 \big) \isect
      \big( C_R \diamond A_2 \big)
      = \emptyset
    \end{equation}
    for all $A_1\neq A_2$ in $\scr A$ with $\lev A_1=\lev A_2$,
  \item the small successor condition
    \begin{equation}\label{eq:successor-property}
      \lev A \geq \mu + \lev \pred(A),
      \qquad \text{$A \in \scr A^{(\alpha)}$},
    \end{equation}
    where $\alpha = 2\, K_X^3 (C_2 + C_R) + C_R/2$ and $\pred \equiv \pred_{\scr A^{(\alpha)}}$.
  \end{enumerate}
  Let $\varphi\, :\, \scr A \to \pow(\scr A)$ be a map such that
  \begin{align}
    \lev Q & > \lev A,
    &&\text{$A \in \scr A$ and $Q \in \varphi(A)$},
    \label{eq:phi-condition-1}\\
    \varphi(A)^* & \subset \frac{C_R}{2\,  K_X}\diamond A,
    &&\text{$A \in \scr A$}.
    \label{eq:phi-condition-2}
  \end{align}

  Then there exist a collection $\scr B$ of adapted cubes in $X$ and a bijective map
  $\sigma:\scr A \rightarrow \scr B$ satisfying
  \begin{align}
    A\union \sigma(\varphi(A))^* \subset \sigma(A)\subset C_R\diamond A,
    && A &\in \scr A.
    \label{eq:sigma-property-1}
    \intertext{and the measure estimate}
    |\sigma(A)| \leq C_d \Big(\frac{K_X(C_2+C_R)}{C_1}\Big)^{\log_2(C_d)}\cdot |A|,
    && A &\in \scr A.
    \label{eq:sigma-property-2}
  \end{align}
  Moreover, the collection
  \begin{equation}\label{eq:ots-nested}
    \scr B = \big\{ \sigma(A)\, :\, A \in \scr A\big\}
  \end{equation}
  is nested.
\end{thm}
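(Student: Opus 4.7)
The plan is to build $\sigma:\scr A\to \scr B$ by recursion on the level, processing $\scr A$ from largest level to smallest. For each $A\in\scr A$ set
\[
  \sigma(A) := A \union \sigma(\varphi(A))^*,
\]
which immediately yields the lower inclusion in~\eqref{eq:sigma-property-1}. The three remaining points---the upper inclusion $\sigma(A)\subset C_R\diamond A$, the measure estimate~\eqref{eq:sigma-property-2}, and the nesting of $\scr B$---are then handled in order.

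The upper inclusion is verified by induction on level. For $B\in\varphi(A)$ the inductive hypothesis gives $\sigma(B)\subset C_R\diamond B$. Since $B\subset(C_R/(2K_X))\diamond A$ by~\eqref{eq:phi-condition-2}, the sets $C_R\diamond B$ and $(C_R/(2K_X))\diamond A$ intersect, and Lemma~\ref{lem:diamond-intersection} yields $C_R\diamond B\subset r\diamond A$ with $r = 2 K_X^3(C_2+C_R)q^{\lev B - \lev A}+C_R/2$. The essential point is the level gap $\lev B-\lev A\geq\mu$, which I derive from the small-successor condition~\eqref{eq:successor-property}: the dyadic ancestor $Q_0\in\scr Q_{\lev A}$ of $B$ belongs to $\scr A^{(\alpha)}$, because $Q_0\supset B$ and $B\subset(C_R/(2K_X))\diamond A\subset \alpha\diamond A$, whence $\lev\pred_{\scr A^{(\alpha)}}(B)\geq\lev A$ and so $\lev B\geq\mu+\lev A$. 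With~\eqref{eq:constant-constraints} this gives $r\leq C_R$. The measure estimate~\eqref{eq:sigma-property-2} is then immediate: Lemma~\ref{lem:diamond} places $\sigma(A)\subset B(m_A,K_X(C_2+C_R)q^{\lev A})$, Theorem~\ref{th:dyadic}(4) places $B(m_A,C_1 q^{\lev A})$ inside $A$, and the bound follows by iterating the doubling condition~\eqref{eq:doubling} the appropriate number of times.

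The main obstacle is the nesting~\eqref{eq:ots-nested}. Same-level pairs are excluded directly by~\eqref{eq:scrA-sparsity}, since $\sigma(A_i)\subset C_R\diamond A_i$ and the $C_R$-expansions are disjoint. For different-level pairs $\lev A_1<\lev A_2$ with $\sigma(A_1)\cap\sigma(A_2)\neq\emptyset$, the goal is $\sigma(A_2)\subset\sigma(A_1)$. Unfolding $\sigma(A_1) = A_1\union\sigma(\varphi(A_1))^*$, the intersection either meets $\sigma(B)$ for some $B\in\varphi(A_1)$---in which case the inductive hypothesis applied to $(B,A_2)$, with same-level separation forcing $A_2=B$ when $\lev B=\lev A_2$, yields $\sigma(A_2)\subset\sigma(B)\subset\sigma(A_1)$---or it meets $A_1$ itself. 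The latter case is controlled by combining the small-successor condition in $\scr A^{(\alpha)}$ with Lemma~\ref{lem:diamond-intersection}: the resulting level gap $\lev A_2-\lev A_1\geq\mu$ is large enough that $\sigma(A_2)\subset C_R\diamond A_2$ is forced into the $\varphi$-cover of $A_1$. Tracking this descent through the $\varphi$-hierarchy and verifying that no intersection slips outside $\sigma(A_1)$ is the crux.
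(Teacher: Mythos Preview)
Your recursive definition $\sigma(A):=A\cup\sigma(\varphi(A))^*$ is strictly smaller than the one in the paper, and with it the nesting claim~\eqref{eq:ots-nested} is \emph{false} in general. The map $\varphi$ is part of the data, subject only to~\eqref{eq:phi-condition-1} and~\eqref{eq:phi-condition-2}; nothing forces $\varphi(A_1)$ to contain, or even to cover, a given $A_2\in\scr A$ of higher level. Take for instance $\varphi(A_1)=\emptyset$ (this satisfies both hypotheses vacuously), so that $\sigma(A_1)=A_1$. Choose $A_2\in\scr A$ with $A_2\subsetneq A_1$ sitting near the boundary of $A_1$, and $\varphi(A_2)=\{A_3\}$ for some $A_3\in\scr A$ with $A_3\subset\tfrac{C_R}{2K_X}\diamond A_2$ but $A_3\not\subset A_1$. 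All hypotheses of the theorem can be arranged (different levels, appropriate gaps), yet $\sigma(A_2)\supset A_2\cup A_3$ meets $\sigma(A_1)=A_1$ without being contained in it. Your Case~2 claim that ``$\sigma(A_2)\subset C_R\diamond A_2$ is forced into the $\varphi$-cover of $A_1$'' cannot hold: the level gap and Lemma~\ref{lem:diamond-intersection} only give $\sigma(A_2)\subset r\diamond A_1$ for some small $r>0$, and $r\diamond A_1$ is in general strictly larger than $A_1=\sigma(A_1)$.

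The paper avoids this by enlarging the definition: for $A\in\scr A_k$ one sets
\[
  \sigma(A)
  := \big(A\cup\sigma(\varphi(A))^*\big)\ \cup\
     \Union\big\{\,B\in\scr B_j:\ j>k,\ B\cap\big(A\cup\sigma(\varphi(A))^*\big)\neq\emptyset\,\big\},
\]
i.e.\ one \emph{absorbs} every previously built adapted cube that touches $A\cup\sigma(\varphi(A))^*$. With this definition nestedness is essentially automatic (any higher-level $B_2$ that meets $\sigma(A_1)$ either already meets $A_1\cup\sigma(\varphi(A_1))^*$ and is absorbed, or is nested inside one of the absorbed sets), and the real work shifts to showing the upper inclusion $\sigma(A)\subset C_R\diamond A$ despite the extra union. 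That is precisely where the small-successor condition~\eqref{eq:successor-property} on $\scr A^{(\alpha)}$, Lemma~\ref{lem:diamond-intersection}, and the constraint~\eqref{eq:constant-constraints} enter---the argument you sketched for the upper inclusion is correct in spirit, but it must be applied to the enlarged $\sigma(A)$, not to your smaller one.
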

\noindent
The hypotheses of Theorem~\ref{thm:one-third-trick} are visualized in
Figure~\ref{fig:one-third-trick-1}.

\begin{myproof}
  We set $\widetilde{\scr A_j} := \scr A \isect \dcubes_j$, $j \in \bb Z$.
  Let the sequence $j_\ell$ be such that $\widetilde{\scr A_{j_\ell}} \neq \emptyset$ and
  $\widetilde{\scr A_k} = \emptyset$ for all $j_{\ell-1} < k < j_{\ell}$, $\ell \leq 0$.
  Then define $\scr A_\ell := \widetilde{\scr A_{j_\ell}}$, $\ell \leq 0$ and assume without
  restriction that $\scr A_0$ consists of the cubes in $\scr A$ with maximal level.
  The proof proceeds by induction on $\lev A$ for cubes $A \in \scr A$, starting with cubes in
  $\scr A_0$.
  \begin{proofstep}\label{proof:thm:one-third-trick:step-1}
    We begin the induction by defining
    \begin{equation*}
      \sigma(A) := A
      \text{ for $A \in \scr A_{0}$}
      \qquad\text{and}\qquad
      \scr B_{0} := \big\{ \sigma(A)\, :\, A \in \scr A_{0} \big\}.
    \end{equation*}
    Observe that~\eqref{eq:sigma-property-1} holds for all $A\in\scr A_0$.
    Now, let $k < 0$ and assume that all the cubes $\sigma(A)$, $A \in \scr A_{j}$, and the
    collections $\scr B_j := \big\{ \sigma(A)\, :\, A \in \scr A_j \big\}$ are already defined for
    all $j > k$.
    In order to construct $\sigma(A)$, let $A \in \scr A_k$ and define
    \begin{equation}\label{eq:sigma-induction-2}
      \sigma(A) := \big( A \union \sigma(\varphi(A))^* \big) \union
      \Big( \Union \big\{ B \in \scr B_j\, :\,
        j > k,B \isect (A \union \sigma(\varphi(A))^*) \neq \emptyset
      \big\}\Big).
    \end{equation}
    We collect all those cubes in
    \begin{equation*}
      \scr B_k := \big\{ \sigma(A)\, :\, A \in \scr A_k \big\}.
    \end{equation*}
    Finally, the set $\scr B$ of all adapted cubes is defined as
    \begin{equation*}
      \scr B := \Union_j \scr B_j.
    \end{equation*}
    In the next two steps we will inductively verify the nestedness of $\scr B$ and the localization
    property~\eqref{eq:sigma-property-1}.
  \end{proofstep}

  \begin{proofstep}\label{proof:thm:one-third-trick:step-2}
    Here we prove the nestedness of $\scr B$.
    To this end, define the level of an adapted cube $B = \sigma(A)$ by $\lev B = \lev A$.
    Let $B_1,\, B_2 \in \scr B$ be such that $B_1 \isect B_2 \neq \emptyset$ and assume
    $\lev B_1 \leq \lev B_2$.
    If $\lev B_1 = \lev B_2$, then properties~\eqref{eq:scrA-sparsity}
    and~\eqref{eq:sigma-property-1} yield $B_1 = B_2$.
    So we may now assume that $\lev B_1 < \lev B_2$.
    Choose $A_1 \in \scr A$ such that $\sigma(A_1) = B_1$.
    If $B_2 \isect \big( A_1 \union \sigma(\varphi(A_1))^* \big) = \emptyset$ we get
    $B_1 \isect B_2 = \emptyset$ by definition of $B_1$, cf.~\eqref{eq:sigma-induction-2}.
    This contradicts the assumption $B_1 \isect B_2 \neq \emptyset$.
    Thus,
    $B_2 \isect \big( A_1 \union \sigma(\varphi(A))^* \big) \neq \emptyset$
    and, by~\eqref{eq:sigma-induction-2} again, we infer $B_2 \subset B_1$,
    proving the nestedness of $\scr B$.
  \end{proofstep}

  \begin{proofstep}\label{proof:thm:one-third-trick:step-3}
    In this step we will verify~\eqref{eq:sigma-property-1}.
    Assume that~\eqref{eq:sigma-property-1} is true for all $A \in \scr A_j$, $j > k$.
    Recall that $\scr B$ is nested by Step~\ref{proof:thm:one-third-trick:step-2} of this proof.
    Now, let $A \in \scr A_k$ be fixed.
    First, note that $A \union \sigma(\varphi(A))^* \subset \sigma(A)$ by
    the definition of $\sigma(A)$, cf.~\eqref{eq:sigma-induction-2}.
    Secondly, we show that $\sigma(A) \subset C_R\diamond A$.
    Let $B \in \scr B_j$, $j > k$ be such that
    $B \isect (A \union \sigma(\varphi(A))^*) \neq \emptyset$.
    The condition $B \isect (A \union \sigma(\varphi(A))^*) \neq \emptyset$ is covered by the cases
    \begin{enumerate}
    \item $B \isect \frac{C_R}{2\, K_X}\diamond A \neq \emptyset$,
      \label{enu:diamond}
    \item there exists a $Q \in \varphi(A)$ such that
      $B \isect \sigma(Q) \neq \emptyset$, and so by~\eqref{eq:ots-nested} either
      \label{enu:nested}
      \begin{enumerate}
      \item $\sigma(Q) \subset B$ or
        \label{enu:nested-1}
      \item $B \subset \sigma(Q)$.
        \label{enu:nested-2}
      \end{enumerate}
    \end{enumerate}
    First, let us consider case~\eqref{enu:diamond}.
    Due to the induction hypothesis, \eqref{eq:sigma-property-1} is true for $\sigma^{-1}(B)$, that
    is $B \subset C_R\diamond \sigma^{-1}(B)$.
    Thus, Lemma~\ref{lem:diamond-intersection} implies
    \begin{equation}\label{eq:ots-localization-1}
      B \subset C_R\diamond \sigma^{-1}(B) \subset r \diamond A,
    \end{equation}
    where
    $r = 2\, K_X^3 (C_2 + C_R)\cdot q^{\lev \sigma^{-1}(B)- \lev A} + C_R/2$.
    Observe that since $r \leq \alpha$, we can find a cube $\widetilde A \in \scr A_k^{(\alpha)}$
    such that $\sigma^{-1}(B) \subsetneq \widetilde A$.
    Hence $\lev \sigma^{-1}(B) \geq \mu + \lev \widetilde A = \mu + \lev A$
    and so
    $r \leq 2\, K_X^3 (C_2 + C_R)\cdot q^\mu + \frac{C_R}{2}$.
    Since $r \leq C_R$ by~\eqref{eq:constant-constraints}, the inclusion $B \subset C_R\diamond A$
    follows.

    In case~\eqref{enu:nested-1}, the first inclusion in~\eqref{eq:sigma-property-1} yields
    $Q \subset \sigma(Q) \subset B$.
    Since $Q \subset \varphi(A)^* \subset \frac{C_R}{2\, K_X}\diamond A$
    by~\eqref{eq:phi-condition-2}, in particular
    $B \isect \frac{C_R}{2\, K_X}\diamond A \neq \emptyset$.
    Hence, case~\eqref{enu:nested-1} is covered by case~\eqref{enu:diamond}.
    In case~\eqref{enu:nested-2}, condition~\eqref{eq:phi-condition-2} implies
    $\sigma(Q) \isect \frac{C_R}{2\, K_X}\diamond A \neq \emptyset$.
    Applying the proof of case~\eqref{enu:diamond} to $\sigma(Q)$ instead of $B$, we obtain
    $\sigma(Q) \subset C_R \diamond A$, and thus $B \subset C_R \diamond A$.
 
    To summarize, in any of the cases~\eqref{enu:diamond}, \eqref{enu:nested-1}
    and~\eqref{enu:nested-2}, the condition
    $B \isect (A \union \sigma(\varphi(A))^*) \neq \emptyset$
    yields $B \subset C_R\diamond A$, which proves~\eqref{eq:sigma-property-1}, i.e.,
    $\sigma(A) \subset C_R\diamond A$.

    Finally, the measure estimate~\eqref{eq:sigma-property-2} is an immediate consequence of the
    doubling condition~\eqref{eq:doubling} and
    \begin{equation*}
      B(m_A, C_1q^{\lev A})
      \subset A \subset \sigma(A) \subset C_R\diamond A
      \subset B(m_A, K_X(C_2+C_R)q^{\lev A}),
    \end{equation*}
    where the latter inclusion follows from Lemma~\ref{lem:diamond}.
    \qedhere
  \end{proofstep}
\end{myproof}

\section{Rearrangement operators}\label{s:applications}

Following~\cite{MP2012}, we define and analyze rearrangement operators on spaces of homogeneous
type, thereby extending the rearrangement operators $T_m$ introduced in~\cite{Figiel1988}, that act
on the standard Haar system.

\bfpar{The shift relation $\tau$}
Let $m \in \bb R$, $m > 0$ and
$\tau \subset \Union\limits_{j \in \bb Z} \dcubes_j \times \dcubes_j$
have the properties
\begin{enumerate}[\indent(P1)]
\item $Q \subset m \diamond P$ for all $(P,Q) \in \tau$, cf. Figure~\ref{fig:shift-setup-1},
  \label{enu:P1}
\item there exists a finite partition $\tau_1,\dots,\tau_M$ of $\tau$ such that
  $\tau_k$ is a bijective function for all $1 \leq k \leq M$.
  \label{enu:P2}
\end{enumerate}
The relation $\tau$ generalizes the classical shift $I\mapsto I+m|I|$ on $\mathbb R$,
cf.~\cite{Figiel1988}.

In order to apply Theorem \ref{thm:one-third-trick} to our shift $\tau$, we decompose $\tau_k$ into
suitable subcollections in the following way.
\begin{enumerate}[\indent(C1)]
\item First, let us choose a constant $C_R > 0$ and split $\tau_k$ into the collections
  $\scr G_{k,1},\dots,\scr G_{k,M_k}$, for all $1 \leq k \leq M$ such that
  \begin{equation}\label{eq:c1}
    \big( C_R\diamond \tau_k^i(A_1) \big)
    \isect
    \big( C_R\diamond \tau_k^n(A_2) \big)
    = \emptyset
  \end{equation}
  for all $A_1, A_2 \in \proj_1(\scr G_{k,j})$, $1 \leq j \leq M_k$, $i,n \in \{0,1\}$, where
  $\tau_k^i(A)$ is defined to be $A$ for $i=0$ and $\tau_k(A)$ for $i=1$.
  The projection onto the first and second coordinates of a relation are denoted by $\proj_1$ and
  $\proj_2$, respectively.
  Observe that the constants $M_k, 1\leq k\leq M$ depend only on $X$, cf.~\cite{MP2012}.
  We refer to Figure~\ref{fig:shift-setup-1} for a picture of the separation
  condition~\eqref{eq:c1}.
  \label{enu:C1}
\item Secondly, let $\ell$ be a positive integer and define
  \begin{equation*}
    \scr H_{k,j,i}^{(\ell)}
    = \scr G_{k,j} \isect \Big(
      \Union_{r \in \bb Z} \dcubes_{i + r\, \ell} \times \dcubes_{i + r\, \ell} 
    \Big),
  \end{equation*}
  for all $1 \leq k \leq M$, $1 \leq j \leq M_k$, $0 \leq i \leq \ell -1$.
  The parameter $\ell$ will later be chosen to be approximately $\log (2+m)$ with $m$ being the
  parameter from (P\ref{enu:P1}).
  \label{enu:C2}
\item Finally, define
  \begin{equation*}
    \psi_{k,j,i}^{(\ell)}(A) = \big\{ (P,Q) \in \scr H_{k,j,i}^{(\ell)}\, :\,
      \text{$P \subsetneq A$ or $Q \subsetneq A$}
    \big\},
  \end{equation*}
  for all $A \in \big( \proj_1(\scr H_{k,j,i}^{(\ell)}) \union \proj_2(\scr H_{k,j,i}^{(\ell)}) \big)$.
  \label{enu:C3}
\end{enumerate}

\begin{figure}[bt]
  \begin{center}
    \begin{pspicture}(0,0)(10,-7)

      \psframe[linestyle=solid](0,0)(.5,-.5)
      \uput{0}[0]{0}(.5,-.25){$P$}
      \psarc[linecolor=gray](0,0){5.5}{270}{0}
      \psline[linecolor=gray,arrowsize=2pt 2]{->}(.5,-.5)(3.889088,-3.889088)
      \uput[225]{0}(3.25,-3.25){{\gray $m$}}
      \psarc(.25,-.25){3.5}{265}{5}
      \psline{->}(.25,-.5)(.25,-3.75)
      \uput[0]{0}(.25,-3.25){$C_R$}
      \rput(2,-3.5){
        \psframe[linestyle=solid](0,0)(.5,-.5)
        \uput{0}[0]{0}(.5,-.25){$Q$}
        \psarc[linestyle=dashed](.25,-.25){3.5}{250}{80}
        \psline[linestyle=dashed]{->}(.5,-.5)(2.724874,-2.724874)
        \uput[225]{0}(2,-2){$C_R$}
      }

      \psframe[linestyle=solid](9.5,0)(10,-.5)
      \uput{0}[225]{0}(9.5,-.5){$P'$}
      \psarc[linecolor=gray](10,0){5.5}{180}{270}
      \psline[linecolor=gray,arrowsize=2pt 2]{->}(9.75,-.5)(9.75,-5.5)
      \uput[180]{0}(9.75,-4.5){{\gray $m$}}
      \psarc(10,0){3.5}{180}{270}
      \psline{->}(9.5,-.25)(6.5,-.25)
      \uput[270]{0}(7,-.25){$C_R$}
      \rput(-.5,-1.5){
        \psframe[linestyle=solid](9.5,0)(10,-.5)
        \uput{0}[180]{0}(9.375,-.25){$Q'$}
        \psarc[linestyle=dashed](10,0){3.5}{155}{278}
        \psline[linestyle=dashed]{->}(9.5,-.5)(7.5,-2.5)
        \uput[0]{0}(8,-2.25){$C_R$}
      }%
    \end{pspicture}
  \end{center}
  \caption{\,}
  \label{fig:shift-setup-1}
\end{figure}

\noindent
The collections $\psi_{k,j,i}^{(\ell)}(A)$ are well localized around $A$, which is discussed in
\begin{lem}\label{lem:successor-localization}
  Let $m \in \bb R$, $m > 0$ and let $\ell$ be a positive integer.
  Then
  \begin{equation*}
    \big( \proj_1(\psi_{k,j,i}^{(\ell)}(A)) \union \proj_2(\psi_{k,j,i}^{(\ell)}(A)) \big)^*
    \subset \big( c_1\, (1 + m)\, q^\ell \big) \diamond A ,
  \end{equation*}
  for all $A \in \scr H_{k,j,i}^{(\ell)}$,
  $1 \leq k \leq M$, $1 \leq j \leq M_k$,
  $0 \leq i \leq \ell -1$.
  The constant $c_1$ depends only on the space of homogeneous type $X$.
\end{lem}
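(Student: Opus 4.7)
The plan is to bound every cube appearing in some pair $(P,Q) \in \psi_{k,j,i}^{(\ell)}(A)$ by a common inflation $c_1(1+m)q^\ell \diamond A$, and then take the union. By definition of $\psi$, each such pair satisfies $P \subsetneq A$ or $Q \subsetneq A$. Two preliminary observations drive the argument. First, the cubes $P$ and $Q$ share the same level (both pairs come from $\scr H_{k,j,i}^{(\ell)} \subset \tau$), and all levels occurring in $\scr H_{k,j,i}^{(\ell)}$ lie in the arithmetic progression $\{i + r\ell : r \in \bb Z\}$, which also contains $\lev A$; hence proper dyadic descent forces a jump of at least $\ell$, so $\lev P \geq \lev A + \ell$ in the first case and $\lev Q \geq \lev A + \ell$ in the second. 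Second, by property~(P\ref{enu:P1}) we always have $Q \subset m \diamond P$.

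In the case $P \subsetneq A$, the cube $P$ sits in $A$, hence in any positive $\diamond$-inflation of $A$. For $Q$, the inclusion $Q \subset m \diamond P = B(P, mq^{\lev P})$, combined with $P \subset A$ and $q^{\lev P} \leq q^\ell q^{\lev A}$, yields $d(z, A) \leq d(z, P) < m q^\ell q^{\lev A}$ for every $z \in Q$, so $Q \subset (mq^\ell)\diamond A$.

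In the case $Q \subsetneq A$, the roles are reversed: $Q \subset A$ is automatic, and the task is to locate $P$, which has the \emph{same} level as $Q$, near $A$. Since $Q \subset m\diamond P$ and $Q \subset 1\diamond Q$, the intersection $(m\diamond P) \isect (1\diamond Q)$ contains $Q$ and is therefore nonempty; applying Lemma~\ref{lem:diamond-intersection} with $A_1 = Q$, $r_1 = 1$, $A_2 = P$, $r_2 = m$ and using $\lev P = \lev Q$ gives $P \subset m\diamond P \subset c(1+m)\diamond Q$, where $c$ depends only on $K_X$ and $C_2$. Combining with $Q \subset A$ and $\lev Q \geq \lev A + \ell$ (so $q^{\lev Q} \leq q^\ell q^{\lev A}$), one more elementary inclusion yields $P \subset c(1+m)q^\ell \diamond A$.

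Choosing $c_1$ to be the larger of the two constants produced above finishes the argument. The main technical point is the second case: the equi-level cube $P$ cannot be controlled merely from a containment in $A$, so Lemma~\ref{lem:diamond-intersection} is invoked to convert the inflation $m \diamond P$ into an inflation of the same-level cube $Q$, whose level is controlled by that of $A$ via the quantized level-gap $\ell$.
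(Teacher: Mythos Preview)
Your proof is correct and rests on the same two ingredients as the paper's: the level gap $\lev P = \lev Q \geq \lev A + \ell$ coming from~(C\ref{enu:C2}), and Lemma~\ref{lem:diamond-intersection} to pass from an inflation of one equi-level cube to an inflation of another. The only structural difference is that the paper treats both cases at once---since $P\cup Q\subset m\diamond P$ and $A$ meets $m\diamond P$ in either case, one application of Lemma~\ref{lem:diamond-intersection} with $A_1=A$, $A_2=P$ gives $P\cup Q\subset \big(2K_X^3(C_2+m)q^{\lev P-\lev A}\big)\diamond A$ directly---whereas you split into cases and, in the second, route through $Q$ before reaching $A$; this is a cosmetic variation, not a different argument.
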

\begin{proof}
  Let $(P,Q)\in \psi_{k,j,i}^{(\ell)}(A)$.
  Then, $P \subsetneq A$ or $Q \subsetneq A$ by definition of $\psi_{k,j,i}^{(\ell)}$.
  We know from  (P\ref{enu:P1}) that
  $P \union Q \subset m\diamond P$, hence Lemma~\ref{lem:diamond-intersection} yields
  \begin{equation*}
    P \union Q \subset \big( 2\, K_X^3 ( C_2 + m )q^{\lev P - \lev A} \diamond A \big).
  \end{equation*}
  Noting that $\lev P \geq \ell+\lev A$ by (C\ref{enu:C2}) concludes the proof.
\end{proof}

\bfpar{The Shift operator $T$}
In order to define analogues of $T_m$ on spaces of homogeneous type, we need a substitute $\{h_Q\}$
for the standard Haar system.
We require the system of functions $\{h_Q\}_{Q\in\scr Q}$ to satisfy the conditions
\begin{enumerate}[\indent(H1)]
\item $\supp h_Q \subset Q$, for all $Q \in \dcubes$,
  \label{enu:H1}
\item $\big\|h_Q\big\|_\infty \leq C_h \frac{1}{|P| + |Q|}\int |h_P|$,
  for all $(P,Q) \in \tau$,
  \label{enu:H2}
\item for each $k$ the collections
  $\big\{ h_P\, :\, P \in \proj_1(\tau_k) \big\}$
  and
  $\big\{ h_Q\, :\, Q \in \proj_2(\tau_k) \big\}$
  constitute martingale difference sequences, separately.
  \label{enu:H3}
\end{enumerate}
The constant $C_h > 0$ is independent of $(P,Q)$.
The collections $\scr H_{k,j,i}^{(\ell)}$, defined in~(C\ref{enu:C2}), naturally induce the
subspaces $H_{k,j,i}^{(\ell)}$ of $L_E^p(X)$ given by
\begin{equation*}
  H_{k,j,i}^{(\ell)} = \Big\{
  f \in L_E^p(X)\, :\,
  f = \sum_{P \in \proj_1(\scr H_{k,j,i}^{(\ell)})} \langle f, h_P \rangle\, h_P
  \Big\}.
\end{equation*}
We now define the shift operators $T_k$ induced by $\tau_k$, $1 \leq k \leq M$, as the linear
extension of the map
\begin{equation}\label{eq:definition-T_k}
  h_P \mapsto
  \begin{cases}
    h_Q,& \text{if $(P,Q) \in \tau_k$},\\
    0,& \text{otherwise}.
  \end{cases}
\end{equation}
If the collections $\psi_{k,j,i}^{(\ell)}$ are sufficiently localized, then the operators $T_k$ are
bounded on the subspace $H_{k,j,i}^{(\ell)}$.
The details are given in the theorem below.
\begin{thm}\label{thm:shift-1}
  Let $X$ be a space of homogeneous type, $E$ a $\umd$--space and
  $1 < p < \infty$.
  Let $m \in \bb R$, $m > 0$, then there exists a constant $\beta > 0$ such that for all integers
  $\ell$ satisfying
  \begin{equation}\label{eq:ell-condition}
    (1 + m)\, q^\ell \leq \beta,
  \end{equation}
  we have
  \begin{equation}\label{eq:figiel-shift-estimate-1}
    \big\| T_k f \big\|_{L_E^p(X)} \leq C\, \|f\|_{L_E^p(X)},
    \qquad \text{$f \in H_{k,j,i}^{(\ell)}$},
  \end{equation}
  for all $1 \leq k \leq M$, $1 \leq j \leq M_k$, $0 \leq i \leq \ell -1$.
  The constant $C$ depends only on $p$, $X$ and $E$, and the constant $\beta$
  only on $X$.
\end{thm}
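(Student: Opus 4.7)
The plan is to apply the adaptive grid construction of Theorem~\ref{thm:one-third-trick} to the union of the first and second coordinates of $\scr H_{k,j,i}^{(\ell)}$, and then to realize $T_k|_{H_{k,j,i}^{(\ell)}}$ as a bounded composition of martingale transforms and a rearrangement on the filtration induced by the resulting adapted cubes. The final $L_E^p$ bound should then follow from the UMD property of $E$, Kahane's contraction principle, and Bourgain's version of Stein's inequality.

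First I would set $\scr A := \proj_1(\scr H_{k,j,i}^{(\ell)}) \union \proj_2(\scr H_{k,j,i}^{(\ell)})$ and fix $C_R$ as in~(C\ref{enu:C1}) and $\mu$ as in~\eqref{eq:constant-constraints}. Hypothesis~\eqref{eq:scrA-sparsity} is precisely~(C\ref{enu:C1}); since~(C\ref{enu:C2}) places the levels of cubes in $\scr A$ in the arithmetic progression $i+\ell\bZ$, the small successor property~\eqref{eq:successor-property} holds as soon as $\ell\ge\mu$. I then define
\[
  \varphi(A) := \proj_1(\psi_{k,j,i}^{(\ell)}(A)) \union \proj_2(\psi_{k,j,i}^{(\ell)}(A)),
\]
so that~\eqref{eq:phi-condition-1} is immediate, and Lemma~\ref{lem:successor-localization} yields $\varphi(A)^*\subset (c_1(1+m)q^\ell)\diamond A$, reducing~\eqref{eq:phi-condition-2} to $(1+m)q^\ell\le C_R/(2 K_X c_1) =: \beta$. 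Shrinking $\beta$ to also enforce $\ell\ge\mu$, Theorem~\ref{thm:one-third-trick} produces a nested family $\scr B=\{\sigma(A):A\in\scr A\}$ satisfying~\eqref{eq:sigma-property-1}--\eqref{eq:sigma-property-2}. In particular, for every $(P,Q)\in\scr H_{k,j,i}^{(\ell)}$, the cubes $\sigma(P)$ and $\sigma(Q)$ are disjoint adapted atoms at the common level $\lev P$, and whenever $(P',Q')\in\scr H_{k,j,i}^{(\ell)}$ has either coordinate strictly contained in $P$, then both $\sigma(P'),\sigma(Q')\subset \sigma(P)$ by~\eqref{eq:sigma-property-1}.

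Next, I would consider the filtration $\scr F$ obtained by refining the atoms of $\scr B$ so that each adapted atom $\sigma(A)$ is further split into $A$ and $\sigma(A)\setminus A$ at level $\lev A$. By~(H\ref{enu:H3}) and $\int h_A=0$, every $h_A$ is then an $\scr F$-martingale difference; and for any $(P,Q)\in\scr H_{k,j,i}^{(\ell)}$, the functions $h_P,h_Q$ are $\scr F$-martingale differences supported in the disjoint adapted atoms $\sigma(P),\sigma(Q)$ at a common $\scr F$-level. The operator $T_k|_{H_{k,j,i}^{(\ell)}}$ then factors as: (i) a martingale transform turning $\langle f,h_P\rangle h_P$ into a normalized $\scr F$-martingale block on $\sigma(P)$; (ii) a rearrangement transporting this block from $\sigma(P)$ to $\sigma(Q)$ via the bijection induced by $\tau_k$; (iii) a martingale transform rebuilding $\langle f,h_P\rangle h_Q$. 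Steps (i) and (iii) are symbol-bounded martingale transforms by~(H\ref{enu:H2}) combined with the doubling-based size estimate~\eqref{eq:sigma-property-2} and are therefore $L_E^p$-bounded by the UMD property of $E$; step (ii) is a rearrangement of disjointly supported $\scr F$-martingale differences of comparable size, to be controlled via Kahane's contraction principle and Bourgain's version of Stein's inequality (cf.~\cite{lechner:thirdshift}).

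The main obstacle I anticipate is the rigorous treatment of step~(ii). Because $\sigma(P)$ and $\sigma(Q)$ may sit at different positions within their common $\scr B$-ancestors, the rearrangement alters conditional expectations and a direct application of the contraction principle is not sufficient. I expect the argument to iterate Stein's inequality levelwise along $\scr B$, using~(H\ref{enu:H2}) and~\eqref{eq:sigma-property-2} to keep the ratios $|\sigma(P)|/|\sigma(Q)|$ and $\|h_P\|_p/\|h_Q\|_p$ bounded by constants depending only on $X$. The resulting $\beta$ then depends only on $X$, while $C$ depends only on $p$, $X$ and the UMD-constant of $E$, exactly as claimed in the theorem.
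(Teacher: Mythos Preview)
Your setup is correct and matches the paper: the choice of $\scr A$, the map $\varphi$, the verification of~\eqref{eq:scrA-sparsity},~\eqref{eq:successor-property},~\eqref{eq:phi-condition-1},~\eqref{eq:phi-condition-2}, and the resulting constraint on $\beta$ are exactly what the paper does. The gap is in what you do \emph{after} obtaining $\sigma$.

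You keep $\sigma(P)$ and $\sigma(Q)$ as \emph{separate} atoms at level $\lev P$ and then attempt, in your step~(ii), to transport a martingale block from $\sigma(P)$ to $\sigma(Q)$. This transport is not a martingale transform with respect to any filtration you have built: $\sigma(P)$ and $\sigma(Q)$ are disjoint atoms, and there is no guarantee that they share the same $\scr B$-predecessor, so the conditional expectations on the two atoms are unrelated. A permutation of disjoint atoms at a fixed level, even when the measures are comparable, is in general \emph{not} bounded on $L_E^p$ by Kahane plus Stein alone; this is precisely the difficulty the adaptive grid is designed to eliminate, and your factorisation reintroduces it. You correctly flag this as ``the main obstacle'', but the sketch you give (``iterate Stein's inequality levelwise along $\scr B$'') does not resolve it.

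The paper's move is to \emph{merge} rather than transport. After obtaining $\sigma$, it inductively defines a second map $\theta$ by
\[
  \theta(P):=\theta(Q):=\sigma(P)\cup\sigma(Q)\cup
  \big\{\theta(R):\lev R>\lev P,\ \theta(R)\cap(\sigma(P)\cup\sigma(Q))\neq\emptyset\big\}^{*}
\]
for $(P,Q)\in\scr H_{k,j,i}^{(\ell)}$, starting from the finest level. The collection $\{\theta(A)\}$ is nested, $P\cup Q\subset\theta(P)=\theta(Q)$, and a computation with Lemma~\ref{lem:diamond-intersection} and~\eqref{eq:ell-condition} gives $\theta(P)\subset(c_3\diamond P)\cup(c_3\diamond Q)$, hence $|\theta(P)|\lesssim|P|+|Q|$. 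Taking $\cal F_n$ to be the $\sigma$-algebra generated by $\{\theta(A):\lev A\le n\}$, the set $\theta(P)$ is an \emph{atom} of $\cal F_{\lev P}$ containing the support of $h_Q$. Now~(H\ref{enu:H2}) and the measure bound give directly the pointwise estimate
\[
  |h_{\tau_k(P)}|\ \le\ c_5\,\cond\big(|h_P|\ \big|\ \cal F_{\lev P}\big),
\]
and no rearrangement is needed: one passes to Rademacher means via~(H\ref{enu:H3}) and the UMD property, replaces $|h_{\tau_k(P)}|$ by the conditional expectation via Kahane's contraction principle, removes the conditional expectation by Bourgain's version of Stein's inequality, and returns to $\|f\|$ by Kahane and UMD again. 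Your steps~(i) and~(iii) are then unnecessary, and the problematic step~(ii) disappears entirely.
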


\begin{proof}
  Let $\ell$ be fixed throughout the proof satisfying~\eqref{eq:ell-condition}.
  Conditions on the constant $\beta$ will be imposed within the proof.

  Our goal is to apply Theorem~\ref{thm:one-third-trick} to each of the collections
  $\scr H_{k,j,i}^{(\ell)}$.
  With $k,j,i$ fixed, let us define the collections
  $\scr C = \scr C^{(1)} \union \scr C^{(2)} = \proj_1(\scr H_{k,j,i}^{(\ell)}) \union \proj_2(\scr H_{k,j,i}^{(\ell)})$
  and let
  \begin{equation*}
    \scr A \subset \scr C
    \quad \text{be a finite set}.
  \end{equation*}
  The function $\varphi\, :\, \scr A \to \pow(\scr A)$ is given by
  \begin{equation*}
    \varphi(A) := \proj_1(\psi_{k,j,i}^{(\ell)}(A)) \union \proj_2(\psi_{k,j,i}^{(\ell)}(A)),
    \qquad\text{$A \in \scr A$},
  \end{equation*}
  where $\psi_{k,j,i}^{(\ell)}$ is defined in~(C\ref{enu:C3}).
  We shall now verify that $\scr A$ and $\varphi$ satisfy the hypotheses of
  Theorem~\ref{thm:one-third-trick}.

  First, observe that the separation condition~\eqref{eq:scrA-sparsity} is satisfied due
  to~(C\ref{enu:C1}).
  Secondly, let $\mu = \ell$, then~\eqref{eq:constant-constraints} holds for sufficiently
  small $\beta$, where the constraint for $\beta$ depends only on $X$.
  Additionally, observe that~(C\ref{enu:C2}) implies~\eqref{eq:successor-property}.
  From Lemma~\ref{lem:successor-localization} and~\eqref{eq:ell-condition} it follows that
  $\varphi(A)^* \subset \frac{C_R}{2\, K_X} \diamond A$ if $\beta$ is sufficiently small.
  Having verified all the hypotheses of Theorem~\ref{thm:one-third-trick}, we
  obtain a nested collection of sets $\scr B$ and a bijective map
  $\sigma\, :\, \scr A \to \scr B$, such that
  \begin{equation*}
    A \union \sigma(\varphi(A))^* \subset \sigma(A)\subset C_R\diamond A
    \qquad \text{and}\qquad
    |\sigma(A)| \leq c_2 (1+C_R)^{\log_2(C_d)}\cdot |A|
  \end{equation*}
  for all $A\in \scr A$.
  The constant $c_2$ depends only on $X$.

  Let us now define a nested collection of sets supporting the shifts $\tau$ inductively,
  beginning with the smallest cubes.
  Set $n_{\max} = \max\{\lev(A)\, :\, A \in \scr A\}$ and define
  \begin{equation*}
    \theta(P) := \theta(Q) := \sigma(P) \union \sigma(Q)
  \end{equation*}
  for all $(P,Q) \in \scr H_{k,j,i}^{(\ell)}$ such that $\lev(P) = n_{\max}$.
  With $n < n_{\max}$ fixed, assume that $\theta(A)$ is already defined for all cubes
  $A$ satisfying $\lev(A) > n$.
  The function $\theta$ is specified by
  \begin{equation*}
    \theta(P) := \theta(Q) := \sigma(P) \union \sigma(Q) \union
    \big\{ \theta(R)\, :\,
      \text{$\lev R > \lev P$,
        $\theta(R) \isect (\sigma(P) \union \sigma(Q)) \neq \emptyset$}
    \big\}^*,
  \end{equation*}
  for all $(P,Q) \in \scr H_{k,j,i}^{(\ell)}$ with $\lev(P) = n$.
  As an immediate consequence of the principle of construction, the collection
  $\{ \theta(A)\, :\, A \in \scr A\}$
  is nested and 
  \begin{equation*}
    P \union Q \subset \theta(P) = \theta(Q),
    \qquad\text{$(P,Q) \in \scr H_{k,j,i}^{(\ell)}$, $P \in \scr A$}.
  \end{equation*}
  Furthermore, a straightforward calculation using Lemma~\ref{lem:diamond-intersection}
  and~\eqref{eq:ell-condition} shows that there exists a
  constant $c_3$ depending only on $X$ such that
  \begin{equation*}
    \theta(P) \subset (c_3\diamond P) \union (c_3 \diamond Q),
    \qquad \text{$(P,Q) \in \scr H_{k,j,i}^{(\ell)}$, $P \in \scr A$},
  \end{equation*}
  if $\beta$ is sufficiently small.
  From the latter inclusion we obtain
  \begin{equation}\label{eq:theta-estimate}
    \theta(P) \leq c_4\, (|P| + |Q|),
    \qquad \text{$(P,Q) \in \scr H_{k,j,i}^{(\ell)}$, $P \in \scr A$},
  \end{equation}
  where $c_4$ depends only on $X$.
  Let us define the filtration $\{ \cal F_{n} \}$ by
  \begin{equation*}
    \cal F_{n} =
    \salg\Big( \big\{ \theta(A)\, :\, A \in \scr A, \lev A \leq n \big\}\Big),
    \qquad\text{$n \in \bb Z$}.
  \end{equation*}
  Observe that $\theta(A)$ is an atom in $\cal F_{\lev A}$ for all $A \in \scr A$, since
  $\big\{ \theta(A)\, :\, A \in \scr A\big\}$ is a nested collection.
  Thus, (H\ref{enu:H2}) and~\eqref{eq:theta-estimate} imply
  \begin{equation}\label{eq:mds-estimate}
    \big| h_{\tau_k(A)} \big| \leq c_5 \cond\big(|h_A|\, \big|\, \cal F_n\big),
    \qquad\text{$A \in \scr E_n$},
  \end{equation}
  where $\scr E_n = \dcubes_n \isect \scr A \isect \scr C^{(1)}$ and $c_5$ depends only on $X$ and
  $C_h$.

  We will now estimate $Tf$ for all $f \in H_{k,j,i}^{(\ell)}$.
  Note that (H\ref{enu:H3}) and the $\umd$--property of $E$ allow us to assume that $f$ is of the
  form
  \begin{equation*}
    f = \sum_{n} \sum_{A \in \scr E_n} \langle f, h_A \rangle\, h_A.
  \end{equation*}
  Moreover, $T|_{H_{k,j,i}^{(\ell)}} = T_k|_{H_{k,j,i}^{(\ell)}}$ is a function due to~(P\ref{enu:P2}).
  By employing (H\ref{enu:H3}) again, we introduce Rademacher means in
  $\|T_k f\|$ and obtain
  \begin{align*}
    \big\| T_k f \big\|
    & = \Big\| \sum_n \sum_{A \in \scr E_n}
      \langle f, h_A \rangle\, h_{\tau_k(A)} \Big\|\\
    & \approx \int\limits _0^1 \Big\| \sum_n r_n(t)
      \sum_{A \in \scr E_n}\, \langle f, h_A \rangle\, h_{\tau_k(A)} \Big\|\dif t.
  \end{align*}
  Furthermore, estimate~\eqref{eq:mds-estimate} yields
  \begin{align*}
    \big\| T_k f \big\|
    &\approx \int\limits _0^1 \Big\| \sum_n r_n(t) \sum_{A \in \scr E_n}
      \langle f, h_A \rangle\, |h_{\tau_k(A)}| \Big\|\dif t\\
    &\lesssim \int\limits _0^1 \Big\| \sum_n r_n(t) \cond\Big(
      \sum_{A \in \scr E_n}
        \langle f, h_A \rangle\, |h_A|\ \Big|\, \cal F_n \Big)
      \Big\|\dif t,
  \end{align*}
  by means of Kahane's contraction principle.
  Applying Bourgain's version of Stein's martingale inequality gives us
  \begin{equation*}
    \big\| T_k f \big\|
    \lesssim \int\limits _0^1 \Big\| \sum_n r_n(t)
      \sum_{A \in \scr E_n} \langle f, h_A \rangle\, |h_A|
    \Big\|\dif t.
  \end{equation*}
  Using Kahane's contraction principle and the $\umd$--property, cf. (H\ref{enu:H3}), concludes the
  proof.
\end{proof}

Combining the estimates of Theorem~\ref{thm:shift-1} on the subspaces $H_{k,j,i}^{(\ell)}$, we
obtain estimates for $T_k$ on $\overline{\lin}\{h_P:P\in \scr Q\}$ in the subsequent theorem,
cf.~\cite{Figiel1988}.
\begin{thm}\label{thm:shift-2}
  Let $X$ be a space of homogeneous type, $E$ a $\umd$--space, $1 < p < \infty$ and
  $m \in \bb R$, $m > 0$.
  Then for all $1 \leq k \leq M$ the linear operator $T_k$ satisfies
  \begin{equation}\label{eq:figiel-shift-estimate-2}
    \big\| T_k f \big\|_{L_E^p(X)} \leq C \log(2 + m)^\alpha\, \|f\|_{L_E^p(X)},
    \qquad \text{$f \in \overline{\lin}\{h_P:P\in \scr Q\}$}.
  \end{equation}
  If $L_E^p(X)$ has type $\cal T$ and cotype $\cal C$, then $\alpha < 1$ is given by
  $1/\cal T - 1/\cal C$.
  The constant $C$ depends only on $p$, $X$, $E$ and $\alpha$.
\end{thm}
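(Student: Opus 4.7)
The plan is to combine the per-subspace estimates from Theorem~\ref{thm:shift-1} into a global estimate on $\overline{\lin}\{h_P : P \in \scr Q\}$, saving a logarithmic factor by means of the type and cotype of $L_E^p(X)$ in the spirit of Figiel~\cite{Figiel1988}. I fix $\ell$ to be the smallest positive integer with $(1+m)q^\ell \leq \beta$, where $\beta = \beta(X)$ is the constant supplied by Theorem~\ref{thm:shift-1}; since $q \in (0,1)$ depends only on $X$, this yields $\ell \leq c_X \log(2+m)$. By (C\ref{enu:C1}) and (C\ref{enu:C2}),
$$\tau_k = \bigcup_{j=1}^{M_k} \bigcup_{i=0}^{\ell-1} \scr H_{k,j,i}^{(\ell)},$$
and the associated Haar projections $P_{k,j,i}$ onto $H_{k,j,i}^{(\ell)}$ are uniformly bounded on $L_E^p(X)$ by the $\umd$--property of $E$. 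For $f \in \overline{\lin}\{h_P : P \in \scr Q\}$,
$$T_k f = \sum_{j=1}^{M_k} \sum_{i=0}^{\ell-1} T_k P_{k,j,i} f,$$
and Theorem~\ref{thm:shift-1} bounds each summand by $C\, \|P_{k,j,i} f\|_{L_E^p}$.

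The structural observation driving the logarithmic savings is that, for each fixed $j$, both the inputs $(P_{k,j,i} f)_{i=0}^{\ell-1}$ and the outputs $(T_k P_{k,j,i} f)_{i=0}^{\ell-1}$ are supported on Haar coefficients indexed by disjoint level residues modulo $\ell$ -- the former by the very definition of $\scr H_{k,j,i}^{(\ell)}$ in (C\ref{enu:C2}), the latter because the relation $\tau_k$ preserves the level of a cube. Since $E$ is $\umd$, sign changes of Haar coefficients are uniformly bounded on $L_E^p(X)$, and for Haar-disjoint pieces $(g_i)$ one has $\|\sum_i g_i\|_{L_E^p} \approx \bigl(\int_0^1 \|\sum_i r_i(t) g_i\|_{L_E^p}^p dt\bigr)^{1/p}$, with equivalence constants depending only on the $\umd$--constant of $E$.

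The main obstacle, and the only place where $\alpha < 1$ enters, is the conversion of the pointwise bound $\|T_k P_{k,j,i} f\| \leq C\|f\|$ into a summed bound with growth $\ell^\alpha$ rather than the trivial $\ell$ from the triangle inequality. I would achieve this by a type--cotype sandwich: first apply type $\cal T$ of $L_E^p(X)$ to dominate the output Rademacher average by $\bigl(\sum_i \|T_k P_{k,j,i} f\|^{\cal T}\bigr)^{1/\cal T}$; then use Theorem~\ref{thm:shift-1} term-by-term to pass to $\bigl(\sum_i \|P_{k,j,i} f\|^{\cal T}\bigr)^{1/\cal T}$; then invoke H\"older's inequality with conjugate exponent $\cal C / \cal T \geq 1$ to interpolate
$$\Big(\sum_{i=0}^{\ell-1} \|P_{k,j,i} f\|^{\cal T}\Big)^{1/\cal T} \leq \ell^{1/\cal T - 1/\cal C} \Big(\sum_{i=0}^{\ell-1} \|P_{k,j,i} f\|^{\cal C}\Big)^{1/\cal C};$$
and finally apply cotype $\cal C$ of $L_E^p(X)$ to bound the last factor by the input Rademacher average, which by the Haar-disjointness observation is comparable to $\|\sum_i P_{k,j,i} f\|_{L_E^p} \lesssim \|f\|_{L_E^p}$. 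Summing over $j \leq M_k \leq M$ (with $M$ depending only on $X$) and inserting $\ell \leq c_X \log(2+m)$ yields~\eqref{eq:figiel-shift-estimate-2} with $\alpha = 1/\cal T - 1/\cal C$.
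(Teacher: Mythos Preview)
Your proposal is correct and follows essentially the same approach as the paper's proof: choose the minimal $\ell$ satisfying~\eqref{eq:ell-condition}, use the $\umd$--property via~(H\ref{enu:H3}) to pass to Rademacher averages, then apply the type--H\"older--cotype sandwich around Theorem~\ref{thm:shift-1} to obtain the factor $\ell^{1/\cal T - 1/\cal C}$. The only cosmetic difference is that the paper randomizes over the double index $(j,i)$ simultaneously, yielding $(M_k\cdot\ell)^{1/\cal T-1/\cal C}$, whereas you randomize over $i$ for each fixed $j$ and then sum over $j$ by the triangle inequality; since $M_k$ depends only on $X$, both routes give the same conclusion.
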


\begin{proof}
  Within this proof we shall abbreviate $\|\cdot \|_{L_E^p(X)}$ by $\|\cdot\|$.
  Let $m>0$ and choose $\ell$ as the minimal integer satisfying~\eqref{eq:ell-condition}, i.e.,
  there exists a constant $c_1$ only depending on $X$ with
  \begin{equation}\label{eq:defl}
    \ell \geq c_1\log(2+m).
  \end{equation}
  Assume that $f$ is a finite sum of the form
  $\displaystyle f
  = \sum_{j=1}^{M_k}\sum_{i=0}^{\ell-1} \sum_{P \in \scr H_{k,j,i}^{(\ell)}} f_P\, h_P$.
  Then, by definition of $T_k$ and the $\umd$--property of $L_E^p(X)$ applied to~(H\ref{enu:H3}), we
  obtain
  \begin{equation*}
    \| T_k f \|
    \lesssim \int_0^1 \Big\| \sum_{j,i} r_{j,i}(t)\, T_k\, d_{j,i}\Big\| \dif t,
  \end{equation*}
  where
  $d_{j,i} = \sum\limits_{P \in \scr H_{k,j,i}^{(\ell)}} f_P\, h_P$.
  The type inequality yields
  \begin{equation*}
    \| T_k f \| \lesssim
    \Big( \sum_{j,i} \| T_k\, d_{j,i} \|^{\cal T} \Big)^{1/\cal T},
  \end{equation*}
  where $L_E^p(X)$ is of type $\cal T$.
  Theorem~\ref{thm:shift-1} implies
  $\| T_k\, d_{j,i} \| \lesssim \| d_{j,i} \|$, hence
  \begin{equation*}
    \| T_k f \|
    \lesssim \big( M_k\cdot \ell \big)^{1/\cal T - 1/\cal C}
    \, \Big( \sum_{j,i} \| d_{j,i} \|^{\cal C} \Big)^{1/\cal C},
  \end{equation*}
  where $L_E^p(X)$ is of cotype $\cal C$.
  The cotype inequality and the $\umd$--property show
  \begin{equation*}
    \| T_k f \|
    \lesssim \big( M_k\cdot \ell \big)^{1/\cal T - 1/\cal C}
    \, \|f\|.
  \end{equation*}
  Since $M_k$ depends only on $X$, using \eqref{eq:defl} gives \eqref{eq:figiel-shift-estimate-2}
  for finite sums $f$ in $\lin\{h_P:P\in\scr Q\}$, thus concluding the proof by unique extension.
\end{proof}

\section{Stripe operator}\label{s:stripe-operator}

\noindent
In this section we define stripe operators on spaces of homogeneous type and provide vector--valued
$L^p$ estimates.
Our notion of stripe operators generalizes those on $\mathbb R^k$ analyzed
in~\cite{lechner:int:2011}, which will now be briefly reviewed.

For a positive integer $\lambda$, the stripes $\scr S_\lambda^{(m)}$ of the dyadic cube $[0,1]^n$
are given by
\begin{equation}\label{eq:stripe-original}
  \scr S_\lambda^{(m)}([0,1]^k)=
  \Bigg\{\, Q\, :\,
    \begin{array}{l}
      \text{$Q$ is a dyadic cube with $|Q|=2^{-\lambda k}$},\\
      Q \subset \Big[\frac{m-1}{2^\lambda},\frac{m}{2^\lambda}\Big]\times [0,1]^{k-1}
    \end{array}
  \Bigg\},
\end{equation}
where $1 \leq m \leq 2^\lambda$.
For an arbitrary dyadic cube $A$, the stripes $\scr S_\lambda^{(m)}(A)$ are obtained by scaling and
translating $\scr S_\lambda^{(m)}([0,1]^k)$ to the position of $A$ in the dyadic grid.
The stripe operators $S_\lambda^{(m)}$ are defined by
\begin{equation}\label{eq:stripe_operator-original}
  S_\lambda^{(m)} h_A := g_{A,\lambda}^{(m)} := \sum_{R \in \scr S_\lambda^{(m)}(A)} h_R,
\end{equation}
where $h_A$ and $h_R$ denote canonical Haar functions supported on the dyadic cubes $A$ and $R$.
Estimates for $S_\lambda^{(m)}$ on $L^p$ were used in~\cite{LeeMuellerMueller2011} as well as
in~\cite{lechner:int:2011} to show weak lower semi-continuity for functionals with separately
convex integrands on scalar and vector--valued $L^p$, respectively.

We will now extend the operators $S_\lambda^{(m)}$ and their vector--valued estimates to spaces of
homogeneous type.
\bfpar{The stripes $\scr S_\lambda^{(m)}$}
Let $\lambda$ and $M$ be positive integers and define the stripes
$\stripe_\lambda^{(m)}(A)$, $A \in \dcubes$, $1\leq m \leq M$
as arbitrary subsets of
$\big\{ B \subset A\, :\, \lev B = \lev A + \lambda\big\}$
satisfying the conditions
\begin{enumerate}[\indent(S1)]
\item $A = \Union_{m=1}^M \stripe_\lambda^{(m)}(A)^*$ is a disjoint union,
  \label{enu:S1}
\item \label{enu:S2}
  there exists an absolute constant $K_1$ such that
  \[
  |\stripe_\lambda^{(m)}(A)^*| \leq K_1\, |\stripe_\lambda^{(n)}(A)^*|,\qquad 1\leq m,n\leq M,
  \]
\item $\big\{ \stripe_\lambda^{(m)}(A)^* \, :\, A \in \dcubes \big\}$ is nested,
  with $1 \leq m \leq M$ being fixed,
  \label{enu:S3}
\item there exist constants $\varepsilon>0$ and $K_2$ depending only on $X$ such that
  for all $1 \leq m \leq M$ we have
  \begin{equation*}
    |\scr E_j^{(m)}(A)^*| \leq K_2 q^{j\varepsilon} |A|,
    \qquad 0\leq j\leq \lambda-1,
  \end{equation*}
  where
  \[
  \scr E_j^{(m)}(A) := \{B \in \dcubes_{\lev A+j} :  B\cap \stripe_\lambda^{(m)}(A)^*\neq \emptyset\}.
  \]
  \label{enu:S4}
\end{enumerate}
The classical stripe~\eqref{eq:stripe-original} defined in $\bb R^k$ equipped with the Euclidean
metric satisfies the conditions (S\ref{enu:S1}) to (S\ref{enu:S4}) with parameters
$M = 2^\lambda$, $K_1 = 1$, $K_2 = 1$, $q = 1/2$ and $\varepsilon = 1$.

\bfpar{The stripe operators $S_\lambda^{(m)}$}
Let the collection of functions $\big\{ h_A\, :\, A \in \dcubes \big\}$ suffice
\begin{enumerate}[\indent(M1)]
\item $\supp h_A \subset A$, for all $A \in \dcubes$,
  \label{enu:M1}
\item $\big\{ h_A\, :\, A \in \dcubes \big\}$ constitutes a martingale
  difference sequence.
  \label{enu:M2}
\end{enumerate}
Moreover, let $\big\{ g_{A,\lambda}^{(m)}\, :\, A \in \dcubes \big\}$, $1 \leq m \leq M$ be
collections of functions that satisfy
\begin{enumerate}[\indent(G1)]
\item $\supp g_{A,\lambda}^{(m)} \subset \stripe_\lambda^{(m)}(A)$,
  for all $A \in \dcubes$ and $1 \leq m \leq M$,
  \label{enu:G1}
\item $\big\{ g_{A,\lambda}^{(m)}\, :\, 1 \leq m \leq M, A \in \dcubes \big\}$
  constitutes a martingale difference sequence,
  \label{enu:G2}
\item $\big\| g_{A,\lambda}^{(m)} \big\|_\infty
  \leq C_g\, \frac{1}{|\stripe_\lambda^{(n)}(A)^*|} \int |g_{A,\lambda}^{(n)}|$, 
  for all $A \in \dcubes$, $1 \leq m, n \leq M$, $m \neq n$ and some constant $C_g\geq 1$.
  \label{enu:G3}
\end{enumerate}
We define the stripe operator $S_\lambda^{(m)}$, $1 \leq m \leq M$, as the linear extension of
\begin{equation}\label{eq:stripe-operator-dfn}
  S_\lambda^{(m)} h_A := g_{A,\lambda}^{(m)},\qquad A\in\scr Q.
\end{equation}
Note that the classical stripe operator~\eqref{eq:stripe_operator-original} satisfies all of the
above conditions.

\begin{lem}\label{lem:stripe-functions}
  Let $g_{A,\lambda}^{(m)}$ and $g_{A,\lambda}^{(n)}$ be stripe functions
  satisfying (G\ref{enu:G1}) and~(G\ref{enu:G3}), then
  \begin{equation*}
    \Big| \Big\{
      |g_{A,\lambda}^{(n)}| \geq \frac{\| g_{A,\lambda}^{(m)} \|_\infty}{2C_g}
    \Big\} \Big|
    \geq \frac{1}{2C_g^2}\, \big| \stripe_\lambda^{(n)}(A)^* \big|.
  \end{equation*}
\end{lem}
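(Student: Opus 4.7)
The plan is to run a standard distribution-function argument. Let me set $t := \|g_{A,\lambda}^{(m)}\|_\infty / (2C_g)$ and $E := \{|g_{A,\lambda}^{(n)}| \geq t\}$, so the goal is to show $|E| \geq |\stripe_\lambda^{(n)}(A)^*|/(2C_g^2)$. The argument will require using (G\ref{enu:G3}) twice: once with the pair $(m,n)$ to get a lower bound on $\int |g_{A,\lambda}^{(n)}|$, and once with the reversed pair $(n,m)$ to get an upper bound on $\|g_{A,\lambda}^{(n)}\|_\infty$.

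First, from (G\ref{enu:G3}) applied in the direction $m,n$ I immediately get
\begin{equation*}
  \int |g_{A,\lambda}^{(n)}| \geq \frac{1}{C_g}\, \|g_{A,\lambda}^{(m)}\|_\infty\,|\stripe_\lambda^{(n)}(A)^*| = 2t\, |\stripe_\lambda^{(n)}(A)^*|.
\end{equation*}
In the other direction, combining (G\ref{enu:G3}) applied to $(n,m)$ with the trivial bound $\int |g_{A,\lambda}^{(m)}| \leq \|g_{A,\lambda}^{(m)}\|_\infty\,|\stripe_\lambda^{(m)}(A)^*|$ gives
\begin{equation*}
  \|g_{A,\lambda}^{(n)}\|_\infty \leq C_g\,\|g_{A,\lambda}^{(m)}\|_\infty = 2C_g^2\, t.
\end{equation*}

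Now I split the integral along $E$ and $E^c$. By (G\ref{enu:G1}) the function $g_{A,\lambda}^{(n)}$ is supported inside $\stripe_\lambda^{(n)}(A)^*$, so on $E^c \isect \stripe_\lambda^{(n)}(A)^*$ the integrand is strictly less than $t$, contributing at most $t\,|\stripe_\lambda^{(n)}(A)^*|$. On $E$ the pointwise bound from the previous step yields a contribution of at most $2C_g^2\, t\,|E|$. Combining,
\begin{equation*}
  2t\,|\stripe_\lambda^{(n)}(A)^*| \leq \int |g_{A,\lambda}^{(n)}| \leq 2C_g^2\, t\,|E| + t\,|\stripe_\lambda^{(n)}(A)^*|,
\end{equation*}
and rearranging (or trivially if $t=0$) gives the claimed estimate.

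There is no serious obstacle here; the only thing to be slightly careful about is treating the degenerate case $\|g_{A,\lambda}^{(m)}\|_\infty = 0$ separately, where the inequality is vacuous, and interpreting (G\ref{enu:G1}) as support inside the set $\stripe_\lambda^{(m)}(A)^*$ obtained by unioning the cubes in the collection $\stripe_\lambda^{(m)}(A)$, consistent with the notation used throughout Section~\ref{s:stripe-operator}.
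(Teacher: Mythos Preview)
Your proof is correct and follows essentially the same route as the paper's: both use (G\ref{enu:G3}) in the $(m,n)$ direction to lower-bound $\int |g_{A,\lambda}^{(n)}|$, use (G\ref{enu:G3}) in the $(n,m)$ direction together with (G\ref{enu:G1}) to obtain $\|g_{A,\lambda}^{(n)}\|_\infty \leq C_g\,\|g_{A,\lambda}^{(m)}\|_\infty$, and then split the integral at the threshold $\|g_{A,\lambda}^{(m)}\|_\infty/(2C_g)$. The only cosmetic difference is that the paper phrases the final comparison as a proof by contradiction, whereas you carry out the same inequality chain directly.
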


\begin{proof}
  We shall abbreviate $g^{(m)} = g_{A,\lambda}^{(m)}$,
  $g^{(n)} = g_{A,\lambda}^{(m)}$ and $\stripe=\stripe_\lambda^{(n)}(A)$.
  Assume the contrary, that is
  \begin{equation}\label{eq:stripe-functions-1}
    \Big| \Big\{
      |g^{(n)}| \geq \frac{\| g^{(m)} \|_\infty}{2C_g}
    \Big\} \Big|
    < \frac{1}{2C_g^2}\, \big| \stripe^* \big|.
  \end{equation}
  Then, (G\ref{enu:G3}) implies
  \begin{align*}
    \frac{\big| \stripe^* \big|}{C_g}\cdot \| g^{(m)} \|_\infty
    \leq \int\limits_{\stripe^*} |g^{(n)}|
    &\leq \Big|\Big\{
        |g^{(n)}| < \frac{\|g^{(m)}\|_\infty}{2C_g}
      \Big\}\Big|\cdot \frac{\|g^{(m)}\|_\infty}{2C_g}\\
    & \qquad+ \Big|\Big\{
        |g^{(n)}| \geq \frac{\|g^{(m)}\|_\infty}{2C_g}
      \Big\}\Big|\cdot \|g^{(n)}\|_\infty.
  \end{align*}
  Observe that~(G\ref{enu:G3}) and~(G\ref{enu:G1}) give us
  $\|g^{(n)}\|_\infty \leq C_g\, \|g^{(m)}\|$, thus
  inserting (G\ref{enu:G1}) and~\eqref{eq:stripe-functions-1} in the latter
  display yields a contradiction, proving the lemma.
\end{proof}

The subsequent results, i.e., the combinatorial Lemma~\ref{lem:stripe-combinatorial} and the
estimates on stripe operators Theorems~\ref{thm:stripe} and~\ref{thm:stripe-main} are proved by
similar methods as their Euclidean counterparts in~\cite{lechner:int:2011}.
\begin{lem}\label{lem:stripe-combinatorial}
  Let $\lambda$ and $k$ be positive integers.
  Then there exists a constant $K_3$ depending only on $X$ such that
  \begin{equation*}
    \bigg| \stripe_\lambda^{(m)}(A)^* \isect
    \Big(\Union_{B\in \scr E^{(m)}(A)} \stripe_\lambda^{(m)}(B)^* \union \stripe_\lambda^{(n)}(B)^* \Big)
    \bigg|
    \leq K_3\, q^{k\cdot \varepsilon}\cdot \big| \stripe_\lambda^{(m)}(A)^* \big|
  \end{equation*}
  for all $1\leq m,n\leq M$ and $A\in\dcubes$, where
\[
\scr E^{(m)}(A):= \bigcup \,\{\scr E_{d\cdot k}^{(m)}(A)\, :\, d\in\mathbb{N},\ 1\leq d\cdot k\leq \lambda-1\}.
\]
\end{lem}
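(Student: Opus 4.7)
The plan is to use the scale decomposition
$\scr E^{(m)}(A)=\Union\{\scr E_{dk}^{(m)}(A):d\in\bb N,\ 1\leq dk\leq \lambda-1\}$,
estimate the contribution of each scale $d$ separately so as to pick up a factor $q^{dk\varepsilon}$ from (S\ref{enu:S4}), and finish by summing a geometric series in $d$. Intersecting with $\stripe_\lambda^{(m)}(A)^*$ on the left-hand side only decreases the measure, so the plain union suffices for an upper bound and one never needs to exploit the intersection.

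For a fixed $d\geq 1$ with $dk\leq\lambda-1$, the cubes $B\in\scr E_{dk}^{(m)}(A)$ all belong to $\dcubes_{\lev A+dk}$ and are therefore pairwise disjoint up to null sets by Theorem~\ref{th:dyadic}(2). Since $\stripe_\lambda^{(m)}(B)^*\union\stripe_\lambda^{(n)}(B)^*\subset B$ by the very definition of the stripes, the sets indexed by different $B$'s are also pairwise disjoint. From (S\ref{enu:S1}) and (S\ref{enu:S2}) one infers $|\stripe_\lambda^{(m')}(B)^*|\leq K_1|B|/M$ for any index $m'$; summing over $B\in\scr E_{dk}^{(m)}(A)$ and applying (S\ref{enu:S4}) yields
\[
\bigg|\Union_{B\in\scr E_{dk}^{(m)}(A)}\bigl(\stripe_\lambda^{(m)}(B)^*\union\stripe_\lambda^{(n)}(B)^*\bigr)\bigg|
\leq \frac{2K_1}{M}\,|\scr E_{dk}^{(m)}(A)^*|
\leq \frac{2K_1K_2}{M}\,q^{dk\varepsilon}\,|A|.
\]

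Replacing $|A|$ by $|\stripe_\lambda^{(m)}(A)^*|$ requires one further use of (S\ref{enu:S1})--(S\ref{enu:S2}): $|A|=\sum_{m'}|\stripe_\lambda^{(m')}(A)^*|\leq K_1M\,|\stripe_\lambda^{(m)}(A)^*|$, so the factor $M$ cancels and the scale-$d$ bound becomes $2K_1^2K_2\,q^{dk\varepsilon}\,|\stripe_\lambda^{(m)}(A)^*|$. Summing the geometric series,
\[
\sum_{d=1}^{\infty}q^{dk\varepsilon}=\frac{q^{k\varepsilon}}{1-q^{k\varepsilon}}\leq\frac{q^{k\varepsilon}}{1-q^{\varepsilon}},
\]
establishes the lemma with $K_3:=2K_1^2K_2/(1-q^{\varepsilon})$, which depends only on $X$. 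I do not anticipate a serious obstacle: the only delicate point is to keep the $1/M$-decay of individual stripe measures coming from (S\ref{enu:S2}) in balance with the $M$-factor gained when passing from $|A|$ to $|\stripe_\lambda^{(m)}(A)^*|$, so that no $M$-dependence remains in the final constant.
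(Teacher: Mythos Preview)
Your proof is correct and follows essentially the same route as the paper: decompose $\scr E^{(m)}(A)$ into scales $d$, use (S\ref{enu:S1})--(S\ref{enu:S2}) to extract the $K_1/M$ factor on each stripe, apply (S\ref{enu:S4}) for the $q^{dk\varepsilon}$ decay, convert $|A|$ back to $|\stripe_\lambda^{(m)}(A)^*|$ via (S\ref{enu:S2}), and sum the geometric series. The only differences are cosmetic (you carry $2K_1$ where the paper carries $(1+K_1)K_1$, and you write out the geometric-series bound explicitly).
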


\begin{proof}
First, observe that
\begin{align*}
  \bigg| \stripe_\lambda^{(m)}(A)^* \isect \Big(\Union_{B\in \scr E^{(m)}(A)}
  &\stripe_\lambda^{(m)}(B)^* \union \stripe_\lambda^{(n)}(B)^* \Big) \bigg|\\
  &\leq \sum_{B \in\scr
    E^{(m)}(A)} |\stripe_\lambda^{(m)}(B)^*|+ |\stripe_\lambda^{(n)}(B)^*|.
\end{align*}
Now we use (S\ref{enu:S2}) to dominate this expression by
\begin{equation}\label{eq:stripe-lemma-1}
(1+K_1)\sum_{B \in\scr E^{(m)}(A)} |\stripe_\lambda^{(m)}(B)^*|.
\end{equation}
Note that (S\ref{enu:S1}) and (S\ref{enu:S2}) also give us
\[
|\stripe_\lambda^{(m)}(B)^*| \leq \frac{K_1}{M} |B|.
\]
The latter inequality implies that~\eqref{eq:stripe-lemma-1} is bounded from above by
\begin{equation}\label{eq:stripe-lemma-2}
\frac{(1+K_1)K_1}{M}\, \Bigg( \sum_{d\,:\, 1\leq d\cdot k < \lambda} \sum_{B\in \scr E_{d\cdot k}^{(m)}(A)}|B| \Bigg).
\end{equation}
Employing (S\ref{enu:S4}) we estimate~\eqref{eq:stripe-lemma-2} by
\[
\frac{(1+K_1)K_1 \,K_2}{M}\, \Bigg(
  \sum_{d\, :\, 1\leq d\cdot k < \lambda} q^{d\cdot k\cdot\varepsilon}|A|
\Bigg).
\]
Finally, applying (S\ref{enu:S2}) concludes the proof of the lemma.
\end{proof}

\begin{thm}\label{thm:stripe}
  Let $X$ be a space of homogeneous type, $E$ a $\umd$--space and $1 < p < \infty$.
  Let $\lambda$ be a positive integer, then there exists a constant $C$ such that
  \begin{equation*}
    \big\| S_\lambda^{(m)} f \big\|_{L_E^p(X)}
    \leq C\, \big\| S_\lambda^{(n)} f \big\|_{L_E^p(X)},
    \qquad \text{$f \in \overline{\lin}\{h_Q:Q\in\scr Q\}$},
  \end{equation*}
  for all $1 \leq m,n \leq M$.
  The constant $C$ depends only on $p$, $X$ and $E$.
\end{thm}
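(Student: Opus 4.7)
By the symmetric role of $m$ and $n$ it suffices to prove one direction, say $\|S_\lambda^{(m)} f\|_{L_E^p(X)} \lesssim \|S_\lambda^{(n)} f\|_{L_E^p(X)}$. Expanding $f = \sum_{A \in \dcubes} f_A h_A$ in the Haar--type system and using the $\umd$--property of $E$ together with the martingale difference property (G\ref{enu:G2}) lets us introduce Rademacher means and rewrite
\[
  \big\| S_\lambda^{(m)} f \big\|_{L_E^p(X)}
  \approx \int_0^1 \Big\| \sum_{A \in \dcubes} r_A(t)\, f_A\, g_{A,\lambda}^{(m)} \Big\|_{L_E^p(X)} \dif t,
\]
and analogously for $n$. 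Mimicking the closing lines of the proof of Theorem~\ref{thm:shift-1}, the objective is to exhibit a filtration $\{\cal F_n\}$ for which
\[
  |g_{A,\lambda}^{(m)}| \leq c\, \cond\big(|g_{A,\lambda}^{(n)}|\ \big|\ \cal F_{\lev A}\big),
  \qquad A \in \dcubes,
\]
with $c$ depending only on $X$ and $C_g$. Granted such a bound, the conclusion follows by Kahane's contraction principle, Bourgain's version of Stein's martingale inequality, and a final application of Kahane's contraction principle together with the $\umd$--property of $E$, in complete analogy with the last steps of Theorem~\ref{thm:shift-1}.

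The filtration is built from a nested family $\{\Theta(A)\, :\, A \in \dcubes\}$ of enlarged stripes, constructed inductively from the deepest cubes upward in direct analogy with the $\theta$--construction in Theorem~\ref{thm:shift-1}. For each $A$ we start from
\[
  \Theta_0(A) := \stripe_\lambda^{(m)}(A)^* \union \stripe_\lambda^{(n)}(A)^*
\]
and define $\Theta(A)$ to be $\Theta_0(A)$ together with all previously--constructed $\Theta(B)$, $\lev B > \lev A$, that meet $\Theta_0(A)$. Nestedness is automatic from this recipe, so $\cal F_n := \salg\big(\{\Theta(B)\, :\, \lev B \leq n\}\big)$ defines a filtration having each $\Theta(A)$ as an atom of $\cal F_{\lev A}$. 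The two required properties are the measure bound $|\Theta(A)| \leq c_1\, |\stripe_\lambda^{(n)}(A)^*|$ and the averaged lower bound
\[
  \frac{1}{|\Theta(A)|} \int_{\Theta(A)} |g_{A,\lambda}^{(n)}|
  \geq c_2\, \|g_{A,\lambda}^{(m)}\|_\infty,
\]
the latter being inherited from Lemma~\ref{lem:stripe-functions} and~(S\ref{enu:S2}) once the measure bound is secured. With these in hand, the desired pointwise inequality follows at once from $|g_{A,\lambda}^{(m)}| \leq \|g_{A,\lambda}^{(m)}\|_\infty\, \charfun_{\stripe_\lambda^{(m)}(A)^*}$ together with $\stripe_\lambda^{(m)}(A)^* \subset \Theta(A)$.

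The principal obstacle is the measure bound for $\Theta(A)$, which must hold uniformly in $\lambda$ and $M$. Each descent step of the construction can inflate $\Theta(A)$ by stripes of smaller sub--cubes in $\scr E^{(m)}(A)$ and $\scr E^{(n)}(A)$, and this is precisely where Lemma~\ref{lem:stripe-combinatorial} is applied: choosing its auxiliary parameter $k$ large enough that $K_3\, q^{k\varepsilon}$ is a sufficiently small geometric ratio forces the masses absorbed at successive generations of depth $k, 2k, \dots$ below $A$ to decay geometrically, so that the accumulated enlargement stays within a fixed multiple of $|\stripe_\lambda^{(m)}(A)^*|$, and hence of $|\stripe_\lambda^{(n)}(A)^*|$ by~(S\ref{enu:S2}). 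Carrying this bookkeeping through the induction so that it is consistent with the nestedness requirement---particularly at generations where $\Theta_0(A)$ sits inside a stripe $\stripe_\lambda^{(k)}(A')^*$ of some coarser cube $A'$ with $k \notin \{m,n\}$---is the delicate technical step, and mirrors the corresponding Euclidean arguments in~\cite{lechner:int:2011}.
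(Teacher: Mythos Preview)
Your overall plan is sound and aligns with the paper's: pass to Rademacher means via~(G\ref{enu:G2}) and the $\umd$--property, build a nested family carrying both stripes, and transfer from $|g^{(m)}_{A,\lambda}|$ to $|g^{(n)}_{A,\lambda}|$ through a conditional--expectation bound and Stein's inequality. But the execution differs from the paper's in a way that leaves a genuine gap.

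\textbf{The gap.} Your additive construction of $\Theta(A)$, taken literally over \emph{all} $B$ with $\lev B>\lev A$, does not satisfy the uniform bound $|\Theta(A)|\leq c\,|\stripe_\lambda^{(n)}(A)^*|$. On $\bb R$ with the standard dyadic grid and stripes $m=1$, $n=2^\lambda$ (the leftmost and rightmost sub--intervals of depth $\lambda$), one checks by direct induction that $|\Theta([0,1])|=(d+2)\,2^{-\lambda}$ when $d$ consecutive levels below $[0,1]$ are included; taking $d\sim\lambda$ gives $|\Theta([0,1])|\sim\lambda\cdot 2^{-\lambda}$, so the ratio to $|\stripe_\lambda^{(n)}([0,1])^*|$ blows up. You invoke the parameter $k$ of Lemma~\ref{lem:stripe-combinatorial}, but never insert a restriction on levels into the definition of $\Theta$; without such thinning the recursion does not close, and the ``delicate technical step'' you defer is in fact the heart of the matter.

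\textbf{How the paper proceeds instead.} The paper first thins the cubes into finitely many sub--collections $\scr C_\nu^{(\delta)}$ (levels taken modulo $k$ within each $\lambda$--block, and modulo $2\lambda$ across blocks), treats each separately, and recombines at the end via~(G\ref{enu:G2}) and the $\umd$--property. Within a fixed $\scr C_\nu^{(\delta)}$ it defines the nested sets \emph{subtractively},
\[
  A(Q)=\big(\stripe_\lambda^{(m)}(Q)^*\cup\stripe_\lambda^{(n)}(Q)^*\big)
  \setminus\bigcup_{\substack{P\in\scr C_{j,\nu}^{(\delta)}\\\lev P>\lev Q}}A(P),
\]
so that $A(Q)\subset\stripe_\lambda^{(m)}(Q)^*\cup\stripe_\lambda^{(n)}(Q)^*$ automatically and no upper measure bound is needed. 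Lemma~\ref{lem:stripe-combinatorial} is then used in the opposite direction, to show that $A(Q)$ still retains a fixed fraction of $\stripe_\lambda^{(n)}(Q)^*$. Finally, two filtrations and two passes through Stein's inequality are employed: first $\cal G_j$, generated by the $m$--stripes (nested by~(S\ref{enu:S3})), to pass from $|g^{(m)}_{Q,\lambda}|$ to $\charfun_{A(Q)}$; then $\cal F_j$, generated by the sets $A(Q)$, to pass from $\charfun_{A(Q)}$ to $|g^{(n)}_{Q,\lambda}|$ via Lemma~\ref{lem:stripe-functions}. This subtractive route sidesteps precisely the inflation problem your additive construction runs into.
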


\begin{proof}
  Let $\lambda \geq 1$ and $m\neq n$ be fixed throughout the proof.
  Define $k$ as the smallest positive integer such that
  $K_3\, q^{k\cdot \varepsilon} \leq \frac{1}{4C_g^2}$, where $K_3$, $\varepsilon$ and $C_g$ are the
  constants appearing in Lemma~\ref{lem:stripe-combinatorial}, (S\ref{enu:S4}) and~(G\ref{enu:G3}),
  respectively.
  Moreover, define the collections
  \begin{align*}
    \scr C_{j,\nu}^{(\delta)}
    & := \Union_{\substack{0 \leq i \leq \lambda-1\\i\mod k = \nu}}
      \dcubes_{(2 j + \delta) \lambda + i},
      & & \text{$j \in \bb Z$, $\delta \in\{0,1\}$, $0 \leq \nu \leq k-1$},\\
    \scr C_\nu^{(\delta)}
    & := \Union_{j \in \bb Z} \scr C_{j,\nu}^{(\delta)},
    & & \text{$\delta \in\{0,1\}$, $0 \leq \nu \leq k-1$}.
  \end{align*}
  With $\nu$ and $\delta$ fixed, set
  \begin{equation*}
    A(Q)
    := \big( \scr S_\lambda^{(m)}(Q)^* \union \scr S_\lambda^{(n)}(Q)^* \big)
    \setminus \Union_{\substack{P \in \scr C_{j,\nu}^{(\delta)}\\\lev P > \lev Q}} A(P),
    \qquad\text{$Q \in \scr C_{j,\nu}^{(\delta)}$}
  \end{equation*}
  for each $j \in \bb Z$.
  This definition is understood as an induction on $\lev Q$, starting with the
  maximal level in $\scr C_{j,\nu}^{(\delta)}$.
  Note that the above union is empty if $\lev Q$ is maximal in
  $\scr C_{j,\nu}^{(\delta)}$.
  Now, Lemma~\ref{lem:stripe-combinatorial} and our choice of $k$ imply
  \begin{equation}\label{proof:thm:stripe-4}
    \big| A(Q) \isect \stripe_\lambda^{(n)}(Q)^* \big|
    \geq \Big( 1 - \frac{1}{4 C_g^2} \Big)\, |\stripe_\lambda^{(n)}(Q)^*|.
  \end{equation}
  We collect all the sets $A(Q)$ in $\scr A$, to be more precise
  \begin{equation*}
    \scr A
    := \big\{ A(Q)\, :\, Q \in \scr C_\nu^{(\delta)}\big\}.
  \end{equation*}
  The inductive construction of $A(Q)$ is performed in such a way that $\scr A$ is nested, which we
  shall show in the following.
  Observe, if $P,Q \in \scr C_{j,\nu}^{(\delta)}$, then
  $A(P) \isect A(Q) = \emptyset$.
  Moreover, if $Q \in \scr C_{j,\nu}^{(\delta)}$, then $A(Q)$ comprises of cubes in
  $\dcubes_{\lev Q + 2\lambda-1}$.
  Thus, if $P \in \scr C_{i,\nu}^{(\delta)}$ and
  $Q \in \scr C_{j,\nu}^{(\delta)}$ with $i < j$, then
  $A(Q) \subset Q \subset A(P)$, provided $A(P) \isect A(Q) \neq \emptyset$.
  Hence, $\scr A$ is a nested collection.
  Let us define
  \begin{equation*}
    \scr A_j := \big\{ A(Q)\in \scr A\, :\, Q \in \dcubes_j \big\},
    \qquad j \in \bb Z,
  \end{equation*}
  and the filtrations $\{\cal F_j\}$ and $\{\cal G_j\}$ by
  \begin{align*}
    \cal F_j &:= \salg \big( \Union_{i \leq j} \scr A_i \big),
    & \cal G_j &:= \salg \big(\big\{ \stripe_\lambda^{(m)}(Q)^*\, :\, Q \in
      \dcubes_i,\ i\leq j
    \big\}\big)
  \end{align*}
  for all $j \in \bb Z$.
  Note that some of the sets $\scr A_j$ are empty.
  So, if $\scr A_j = \emptyset$, we delete the $\sigma$--algebras $\cal F_j$ and $\cal G_j$ from
  their respective filtrations.

  Let $f \in L_E^p(X)$ have the representation $f = \sum_{Q \in \scr C_\nu^{(\delta)}} f_Q\, h_Q$.
  Due to (G\ref{enu:G2}), the $\umd$--property and Kahane's contraction principle yield
  \begin{equation}\label{proof:thm:stripe-1}
    \big\| S_\lambda^{(m)} f \big\|
    = \Big\| \sum_{Q \in \scr C_\nu^{(\delta)}} f_Q\, g_{Q,\lambda}^{(m)} \Big\|
    \lesssim \int_0^1 \Big\|
      \sum_{Q \in \scr C_\nu^{(\delta)}} r_Q(t)\, f_Q\, \big| g_{Q,\lambda}^{(m)}\big|
    \Big\|
    \dif t.
  \end{equation}
  First, observe that
  \begin{equation*}
    \charfun_{\stripe_\lambda^{(m)}(Q)^*}
    \leq \frac{|\stripe_\lambda^{(m)}(Q)^*|}{|A(Q)|}\,
      \cond(\charfun_{A(Q)} \mid \cal G_{\lev Q}),
      \qquad Q \in \scr C_\nu^{(\delta)}.
  \end{equation*}
  Secondly, due to our choice of $k$, we obtain from Lemma~\ref{lem:stripe-combinatorial} that
  $|\stripe_\lambda^{(m)}(Q)^*| \leq \frac{4}{3}\, |A(Q)|$ for all $Q \in \scr C_\nu^{(\delta)}$.
  The latter two inequalities imply
  \begin{equation}\label{proof:thm:stripe-2}
    \big| g_{Q,\lambda}^{(m)} \big|
    \leq \big\| g_{Q,\lambda}^{(m)} \big\|_\infty
      \cdot \charfun_{\stripe_\lambda^{(m)}(Q)^*}
    \leq \frac{4}{3}\, \big\| g_{Q,\lambda}^{(m)} \big\|_\infty
      \cdot \cond(\charfun_{A(Q)} \mid \cal G_{\lev Q}),
    \qquad Q \in \scr C_\nu^{(\delta)}.
  \end{equation}
  Combining~\eqref{proof:thm:stripe-1} and~\eqref{proof:thm:stripe-2}, together with Kahane's
  contraction principle yield
  \begin{equation*}
    \big\| S_\lambda^{(m)} f \big\|
    \lesssim \int_0^1 \Big\|
      \sum_{Q \in \scr C_\nu^{(\delta)}} r_Q(t)\, f_Q\,
      \big\| g_{Q,\lambda}^{(m)}\big\|_\infty\,
      \cond(\charfun_{A(Q)} \mid \cal G_{\lev Q})
    \Big\|
    \dif t.
  \end{equation*}
  Applying Bourgain's version of Stein's martingale gives
  \begin{equation}\label{proof:thm:stripe-3}
    \big\| S_\lambda^{(m)} f \big\|
    \lesssim \int_0^1 \Big\|
      \sum_{Q \in \scr C_\nu^{(\delta)}} r_Q(t)\, f_Q\,
      \big\| g_{Q,\lambda}^{(m)}\big\|_\infty\, \charfun_{A(Q)}
    \Big\|
    \dif t.
  \end{equation}
  By (G\ref{enu:G1}), the support of $g_{Q,\lambda}^{(n)}$ is a subset of
  $\stripe_\lambda^{(n)}(Q)^*$.
  If we define
  \begin{equation*}
    V:=\Big\{
      |g_{Q,\lambda}^{(n)}| \geq \frac{\| g_{Q,\lambda}^{(m)} \|_\infty}{2C_g}
    \Big\} \isect A(Q)\isect \stripe_\lambda^{(n)}(Q)^*,
  \end{equation*}
  then~\eqref{proof:thm:stripe-4} and Lemma~\ref{lem:stripe-functions} imply
  \begin{equation}\label{proof:thm:stripe-5}
    | V |
    \geq \frac{1}{4C_g^2}\, \big| \stripe_\lambda^{(n)}(Q)^* \big|
    \geq \frac{1}{4C_g^2\,(1+K_1)}\, | A(Q) |.
  \end{equation}
  As a consequence of the definition of $V$ and~\eqref{proof:thm:stripe-5},
  \begin{align*}
    \big\| g_{Q,\lambda}^{(m)}\big\|_\infty\cdot \charfun_{A(Q)}
    & \leq \Big( \frac{2C_g}{|V|} \int_V |g_{Q,\lambda}^{(n)}|\,
    \Big)\cdot \charfun_{A(Q)}\\
    & \leq \Big( \frac{8C_g^3(1+K_1)}{|A(Q)|}
      \int\limits_{A(Q)} |g_{Q,\lambda}^{(n)}|\, \Big)
      \cdot \charfun_{A(Q)}\\
    & \leq 8C_g^3(1+K_1)\cdot \cond(|g_{Q,\lambda}^{(n)}| \mid \cal F_{\lev Q})
  \end{align*}
  for all $Q \in C_\nu^{(\delta)}$.
  Plugging the latter estimate into~\eqref{proof:thm:stripe-3}, Kahane's contraction principle
  yields
  \begin{equation*}
    \big\| S_\lambda^{(m)} f \big\|
    \lesssim \int_0^1 \Big\|
      \sum_{Q \in \scr C_\nu^{(\delta)}} r_Q(t)\, f_Q\,
      \cond(|g_{Q,\lambda}^{(n)}| \mid \cal F_{\lev Q})
    \Big\|
    \dif t.
  \end{equation*}
  Subsequently, applying Stein's martingale inequality, Kahane's contraction
  principle to pass from $|g_{Q,\lambda}^{(n)}|$ to $g_{Q,\lambda}^{(n)}$ and
  finally using the 
  $\umd$--property to dispose of the Rademacher functions, concludes the proof.
\end{proof}

Applying the estimate in Theorem~\ref{thm:stripe}, i.e., the uniform equivalence of stripe operators,
we obtain upper and lower estimates for $S_\lambda^{(m)}$ via the cotype and type inequalities,
respectively.
\begin{thm}\label{thm:stripe-main}
  Let $X$ be a space of homogeneous type, $E$ a $\umd$--space and $1<p<\infty$.
  Moreover, let $\lambda$ be a positive integer and $1\leq m\leq M$. If we assume
  \begin{equation}\label{eq:stripe:assumption}
    \big\|\sum_{n=1}^M S_\lambda^{(n)}h_Q\big\|_\infty\leq  C_S \cdot \frac{1}{|Q|}\int |h_Q|,
    \qquad Q\in\dcubes,
  \end{equation}
  then
  \begin{equation}\label{eq:stripe:assertion}
    \|S_\lambda^{(m)}f\|_{L_E^p(X)} \leq C\cdot C_S M^{-1/\cal C} \|f\|_{L_E^p(X)},\qquad f\in \overline\lin\{h_P:P\in \scr Q\},
  \end{equation}
  where $L_E^p(X)$ has cotype $\cal C$ and the constant $C$ depends only on  $p$, $X$ and $E$.

  On the other hand, if we assume
  \begin{equation}\label{eq:stripe:assumption:2}
    \|h_Q\|_\infty\leq C_S\cdot \sum_{n=1}^M \frac{1}{|Q|}\int |S_\lambda^{(n)} h_Q|,\qquad Q\in\dcubes,
  \end{equation}
  then
  \begin{equation}\label{eq:stripe:assertion:2}
    \|S_\lambda^{(m)}f\|_{L_E^p(X)} \geq C\cdot C_S^{-1} M^{-1/\cal T} \|f\|_{L_E^p(X)},\qquad f\in \overline\lin\{h_P:P\in \scr Q\},
  \end{equation}
  where $L_E^p(X)$ has type $\cal T$ and the constant $C$ depends only on  $p$, $X$ and $E$.
\end{thm}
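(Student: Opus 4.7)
The plan is to combine the uniform equivalence $\|S_\lambda^{(m)}f\|\approx\|S_\lambda^{(n)}f\|$ obtained in Theorem~\ref{thm:stripe} with the cotype (respectively type) inequality for $L_E^p(X)$, and then to pass between $\|\sum_{n=1}^M S_\lambda^{(n)}f\|$ and $\|f\|$ via the Kahane/Stein/UMD chain already executed in the proofs of Theorem~\ref{thm:shift-1} and Theorem~\ref{thm:stripe}.

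For the upper bound~\eqref{eq:stripe:assertion}, I would first invoke Theorem~\ref{thm:stripe} to infer
\begin{equation*}
M\,\|S_\lambda^{(m)}f\|^{\mathcal{C}}\lesssim \sum_{n=1}^M\|S_\lambda^{(n)}f\|^{\mathcal{C}},
\end{equation*}
apply the cotype-$\mathcal{C}$ inequality to majorize the right-hand side by the Rademacher average $\int_0^1\|\sum_n r_n(t)\,S_\lambda^{(n)}f\|^{\mathcal{C}}\dif t$, and then appeal to~(G\ref{enu:G2}) together with the UMD--property of $E$ to discard the Rademachers and arrive at $\|\sum_n S_\lambda^{(n)}f\|^{\mathcal{C}}$. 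Writing $f=\sum_Q f_Q h_Q$, hypothesis~\eqref{eq:stripe:assumption} combined with $\supp(\sum_n g_{Q,\lambda}^{(n)})\subset Q$ (from~(G\ref{enu:G1}) and~(S\ref{enu:S1})) and the fact that $Q$ is an atom of $\cal F_{\lev Q}$ yields the pointwise domination $|\sum_n g_{Q,\lambda}^{(n)}|\leq C_S\,\cond(|h_Q|\mid\cal F_{\lev Q})$. Randomizing by~(M\ref{enu:M2}) and UMD, then applying Kahane's contraction principle, Stein's martingale inequality, and Kahane plus UMD in reverse, produces $\|\sum_n S_\lambda^{(n)}f\|\lesssim C_S\,\|f\|$, whence~\eqref{eq:stripe:assertion} follows by combining the two estimates.

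For the lower bound~\eqref{eq:stripe:assertion:2} I would run essentially the same argument with type in place of cotype and with the roles of $f$ and $\sum_n S_\lambda^{(n)}f$ swapped. The type-$\mathcal{T}$ inequality together with Theorem~\ref{thm:stripe}, (G\ref{enu:G2}) and UMD gives $\|\sum_n S_\lambda^{(n)}f\|\lesssim M^{1/\mathcal{T}}\,\|S_\lambda^{(m)}f\|$. On the other hand, hypothesis~\eqref{eq:stripe:assumption:2} combined with the atomicity of $Q$ in $\cal F_{\lev Q}$ yields the pointwise estimate $|h_Q|\leq C_S\,\cond(\sum_n|g_{Q,\lambda}^{(n)}|\mid\cal F_{\lev Q})$, and the disjointness clause~(S\ref{enu:S1}) provides the pointwise identity $\sum_n|g_{Q,\lambda}^{(n)}|=|\sum_n g_{Q,\lambda}^{(n)}|$. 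Randomizing $h_Q$ by~(M\ref{enu:M2}) and UMD, applying the pointwise bound, Stein's inequality, the identity above, Kahane's contraction to remove the outer absolute value, and finally UMD with~(G\ref{enu:G2}) should yield $\|f\|\lesssim C_S\,\|\sum_n S_\lambda^{(n)}f\|$, and~\eqref{eq:stripe:assertion:2} follows.

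The substantive part is the careful bookkeeping of the UMD/Kahane/Stein exchanges, but this is essentially identical to the scheme already carried out in the proof of Theorem~\ref{thm:stripe} and poses no new difficulty. The only genuinely new ingredient is the pointwise identity $\sum_n|g_{Q,\lambda}^{(n)}|=|\sum_n g_{Q,\lambda}^{(n)}|$ used in the lower-bound direction, which I expect to be the one point worth verifying explicitly; it is an immediate consequence of the disjointness clause~(S\ref{enu:S1}).
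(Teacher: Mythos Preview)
Your proposal is correct and follows essentially the same route as the paper's proof: both combine Theorem~\ref{thm:stripe}, the cotype (resp.\ type) inequality, the UMD property via the martingale structure~(G\ref{enu:G2}) of the $(d_{j,n})$, and the Kahane/Stein chain linking $\|\sum_n S_\lambda^{(n)}f\|$ with $\|f\|$ through the pointwise hypotheses. The only difference is the direction of traversal---the paper starts from $\|f\|$ and works down to $\|S_\lambda^{(m)}f\|$, whereas you start from $\|S_\lambda^{(m)}f\|$---and your explicit remark on the identity $\sum_n|g_{Q,\lambda}^{(n)}|=|\sum_n g_{Q,\lambda}^{(n)}|$ makes precise a point the paper leaves implicit in its one-line sketch of~\eqref{eq:stripe:assertion:2}.
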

\begin{proof}
  First, we prove inequality~\eqref{eq:stripe:assertion} under the
  hypothesis~\eqref{eq:stripe:assumption}.
  Let $f=\sum_Q f_Q h_Q$ be a finite sum and $m$ be an integer in the range $1\leq m\leq M$.
  By (M\ref{enu:M2}), $\{h_Q\}$ is a martingale difference sequence, thus
  \[
  \|f\|\gtrsim \int_0^1 \Big\| \sum_Q r_Q(t) f_Q |h_Q|\Big\| \dif t
  \] 
  as a consequence of the $\umd$--property of $E$ and Kahane's contraction principle.
  Define the filtration $\{\cal F_j\}$ by
  \[
  \cal F_j = \salg \big( \Union_{i \leq j} \dcubes_i \big).
  \]
  Then, Bourgain's version of Stein's martingale inequality yields
  \begin{equation}\label{eq:stripe:bound:1}
    \|f\|\gtrsim \int_0^1 \Big\| \sum_Q r_Q(t) f_Q \cond(|h_Q|\mid \cal F_{\lev Q})\Big\| \dif t.
  \end{equation}
  Observe that $Q$ is an atom in the $\sigma$-algebra $\cal F_{\lev Q}$ for all $Q\in\dcubes$, and thus
  \[
  \cond(|h_Q|\mid \cal F_{\lev Q}) = \Big(\frac{1}{|Q|}\int |h_Q| \Big)\cdot\charfun_{Q}
  \geq C_S^{-1}\cdot \Big\|\sum_{n=1}^M S_\lambda^{(n)} h_Q\Big\|_\infty\charfun_Q,
  \]
  where we used (M\ref{enu:M1}) and \eqref{eq:stripe:assumption}.
  Plugging the latter inequality into \eqref{eq:stripe:bound:1}, Kahane's contraction principle
  implies
  \begin{equation*}
    \|f\|\gtrsim C_S^{-1} \int_0^1 \Big\| \sum_Q r_Q(t) f_Q\sum_{n=1}^M S_\lambda^{(n)}h_Q \Big\| \dif t.
  \end{equation*}
  Condition~(G\ref{enu:G2}) and the $\umd$--property of $L_E^p(X)$ yield
  \begin{equation}\label{eq:stripe:bound:2}
    \|f\| \gtrsim C_S^{-1} \Big\|
      \sum_{j\in\mathbb{Z}}\sum_{n=1}^M S_\lambda^{(n)}\Big(\sum_{Q\in\dcubes_j} f_Q h_Q\Big)
    \Big\|.
  \end{equation}
  Now let
  \[
  d_{j,n} := S_\lambda^{(n)}\Big(\sum_{Q\in\dcubes_j} f_Q h_Q\Big)
  \]
  and observe that $(d_{j,n})$ constitutes a martingale difference sequence with respect to the
  lexicographic ordering on the index pairs $(j,n)$.
  Thus, \eqref{eq:stripe:bound:2} implies
  \begin{equation*}
    \|f\|\gtrsim C_S^{-1} \int_0^1\Big\| \sum_{n=1}^M r_n(t)\sum_{j\in\mathbb{Z}} d_{j,n} \Big\|\dif t.
  \end{equation*}
  Since $L_E^p(X)$ has cotype $\cal C$, we employ the cotype inequality to obtain
  \begin{align*}
    \|f\|
    \gtrsim C_S^{-1} \Big(
      \sum_{n=1}^M \big\| \sum_{j\in\mathbb{Z}} d_{j,n} \big\|^{\cal C}
    \Big)^{1/\cal C}
    = C_S^{-1} \Big(
      \sum_{n=1}^M \big\|  S_\lambda^{(n)}f \big\|^{\cal C}
    \Big)^{1/\cal C}.
  \end{align*}
  By Theorem \ref{thm:stripe}, $\|S_\lambda^{(n)}f \| \gtrsim \|S_\lambda^{(m)}f \|$ for all $1\leq n\leq M$, and therefore
  \[
  \|f\|\gtrsim C_S^{-1} M^{1/\cal C}\|S_\lambda^{(m)}f \|,
  \]
  proving \eqref{eq:stripe:assertion}. 

  A similar argument replacing the cotype inequality by the type inequality proves
  \eqref{eq:stripe:assertion:2} under the condition \eqref{eq:stripe:assumption:2}.
\end{proof}

\subsection*{Acknowledgments}
The authors are grateful to P. F. X. M\"uller for arranging this collaboration and many enlightening
discussions.
We would like to thank the referee whose suggestions led to the elimination of two technical
assumptions originally imposed on the space of homogeneous type.
M.\,P. is supported by the Austrian Science Fund, FWF Project P 23987-N18.

\bibliographystyle{plain}
\bibliography{bibliography}

\end{document}